\documentclass[11pt]{amsart}

\usepackage[T1]{fontenc}
\usepackage[utf8]{inputenc}
\usepackage{lmodern}
\usepackage{amsmath,amssymb,amsthm,mathtools}
\usepackage{microtype}
\usepackage[colorlinks=true,linkcolor=blue,citecolor=blue,urlcolor=blue]{hyperref}
\usepackage[nameinlink,capitalise,noabbrev]{cleveref}
\usepackage[a4paper,margin=1in]{geometry}
\usepackage{float}
\usepackage{tikz}

\usepackage{tikz-cd}

\usepackage{float}
\usetikzlibrary{arrows.meta,positioning,calc}

\newtheorem{theorem}{Theorem}[section]
\newtheorem{proposition}[theorem]{Proposition}
\newtheorem{lemma}[theorem]{Lemma}
\newtheorem{corollary}[theorem]{Corollary}
\theoremstyle{definition}

\newtheorem{question}[theorem]{Question}
\theoremstyle{remark}
\newtheorem{remark}[theorem]{Remark}

\title[Constructive Peter--Weyl Theory]{Constructive Peter--Weyl Theory: What is Known and What Remains Open}
\author{Takao Inou\'{e}}
\address{Faculty of Informatics, Yamato University, Osaka, Japan}
\email{inoue.takao@yamato-u.ac.jp}
\thanks{Personal Email: takaoapple@gmail.com. I prefer my personal email address for correspondence.}
\date{March 18, 2026}

\keywords{Peter--Weyl theorem, constructive mathematics, compact topological groups, almost periodic functions, harmonic analysis, formal topology, locale theory}
\subjclass[2020]{43A65, 03F60, 22C05, 46J10}

\begin{document}

\begin{abstract}
This survey-style note reviews constructive versions of the Peter--Weyl theorem in the Bishop--Coquand--Spitters line. Its main purpose is to clarify which parts of the classical Peter--Weyl package admit constructive reformulations, which parts survive only in weaker or reorganized form, and which questions still appear to remain open. The term ``constructive'' is used here primarily in the Bishop-style sense, together with the related locale-theoretic and formal-topological developments that occur in the work of Coquand and Spitters. We review the constructive compact-group results of Coquand and Spitters, the later role of almost periodic functions and compact completions, and the interaction with constructive Gelfand representation and locale-theoretic compactness. The guiding theme is that the constructive theory exists, but it is often most naturally expressed not as a literal transcription of the classical theorem in terms of irreducible decompositions alone, but rather through finite-rank approximation, characters, and compactifications attached to functions or groups. For orientation and comparison, we also include an appendix giving a standard classical form of the Peter--Weyl theorem together with a pedagogical Haar-measure-based proof, followed by comments indicating where the classical argument relies on steps that are not automatically constructive. Possible later extensions to topological loops and quasigroups are included as a programmatic direction rather than as part of the currently established core.
\end{abstract}

\maketitle

\tableofcontents

\section{Introduction}

The classical Peter--Weyl theorem is one of the central structural results in harmonic analysis on compact groups.
In its familiar textbook form, it links finite-dimensional continuous unitary representations of a compact group $G$ with the analytic structure of $C(G)$ and $L^2(G)$.
It says, in effect, that the regular representation is built from finite-dimensional pieces and that matrix coefficients of such pieces are rich enough to approximate continuous functions.
Thus the theorem is not merely a result about representations.
It is also a theorem about approximation, orthogonality, compactness, invariant integration, and the internal architecture of the function spaces attached to $G$.

Precisely because the theorem is so fundamental, it is natural to ask what remains of it when one passes to constructive mathematics.
At first glance the question seems delicate.
Classical proofs typically pass through spectral arguments, orthogonality relations, compact operators, irreducible representations, and at times existence principles whose constructive status is not automatic.
One may therefore wonder whether the Peter--Weyl theorem is inherently nonconstructive, or whether only some of its standard formulations and proofs are nonconstructive while the underlying approximation phenomenon survives in a more careful form.

The answer suggested by the literature is encouraging and subtle at the same time.
Constructive Peter--Weyl theory does exist for compact groups.
However, the constructive version is not best understood as a verbatim copy of the most familiar classical statement.
Rather, the constructive setting tends to reorganize the theorem.
Characters, finite-rank approximation operators, almost periodic functions, compact completions, and locale-theoretic compactness become especially prominent.
From this point of view, constructive Peter--Weyl theory is not simply a weaker shadow of the classical theory; it is also a clarification of which aspects of the classical package are truly structural and which are artifacts of a particular style of exposition.

A terminological remark is essential here.
The phrase ``constructive mathematics'' does not refer to a single unified school.
It may designate, depending on context, Brouwerian intuitionism, Bishop-style constructive analysis, Russian recursive mathematics, type-theoretic foundations in the style of Martin-L"of, proof-theoretic programs such as proof mining, or more recent computability-theoretic frameworks.
In the present note, however, the adjective ``constructive'' is used in a more specific and historically localized sense.
Our primary point of reference is the Bishop-style line of constructive analysis, together with the related locale-theoretic and formal-topological developments that appear in the work of Coquand and Spitters.
We do not attempt here to survey all schools of constructivism.
Rather, we focus on the particular constructive-analytic setting in which the cited Peter--Weyl results are actually formulated.

The purpose of the present note is therefore twofold.
First, it aims to provide a compact survey-style map of the constructive situation: what is already known, in what form it is known, and what points still appear to deserve clarification.
This is the primary purpose of the paper.
Second, it aims to prepare a conceptual template for later work beyond the group setting.
If one eventually wishes to seek Peter--Weyl-type analogues for topological loops or quasigroups endowed with Haar-type or quasi-invariant measure structures, then it is important first to isolate the modular ingredients of the compact group theory.
The constructive viewpoint is especially helpful for this purpose, because it naturally forces one to separate compactness, invariant integration, approximation, and representation-theoretic separation instead of treating them as an inseparable block.
A further pedagogical aim is served by the appendix, where we record a standard classical form of the theorem and a comparatively explicit Haar-measure-based proof.
That appendix is included not to compete with the classical textbooks, but to make completely visible the specific points at which the usual proof invokes global spectral and compactness principles whose constructive status requires separate analysis.

Accordingly, the present note is intended primarily as an expository survey of the constructive Peter--Weyl theory that emerges from the work of Coquand and Spitters and its surrounding developments.
Its discussions of topological loops and quasigroups should be read as a longer-term research program rather than as part of the currently established theory.
This note is intentionally modest in scope.
It is not a complete historical survey, and it does not claim to settle every technical point mentioned below.
Rather, it is meant as a mathematically serious orientation document: a working survey, a research guide, and a possible introduction to a later, more definitive article.

\subsection{Structure of the note}

\Cref{sec:classical-package} recalls the standard classical package usually gathered under the name of the Peter--Weyl theorem.
\Cref{sec:why-reformulation} explains why constructive reformulation is needed.
\Cref{sec:known-results} summarizes the main constructive strands already present in the literature, especially those associated with Coquand and Spitters.
\Cref{sec:dictionary} records a provisional dictionary between familiar classical formulations and the kinds of statements that seem to play the corresponding role constructively.
\Cref{sec:established} and \cref{sec:open} separate what presently seems well established from what still seems open or at least insufficiently clarified.
Finally, \cref{sec:loops,sec:program,sec:conclusion} indicate why these questions may matter for later work on loops and quasigroups and suggest a staged program for future development. The appendix records a standard classical Peter--Weyl theorem together with a pedagogically expanded proof based on Haar measure, followed by comments on the points where the classical argument ceases to be automatically constructive.

\section{The Classical Peter--Weyl Package}\label{sec:classical-package}

For a compact Hausdorff group $G$, the phrase ``the Peter--Weyl theorem'' often refers not to a single statement but to a package of closely related statements.
Among the most common are the following.

\begin{enumerate}
  \item Matrix coefficients of finite-dimensional continuous unitary representations form a $*$-subalgebra of $C(G)$ that is uniformly dense.
  \item The irreducible characters span a dense subspace of the central continuous functions.
  \item The left regular representation on $L^2(G)$ decomposes as a Hilbert direct sum of finite-dimensional irreducible subrepresentations.
  \item Convolution by suitable central functions yields finite-rank approximation operators.
\end{enumerate}

These formulations are equivalent or closely intertwined in the classical setting, but they are not equally suitable constructively.
Some are naturally pointwise and spectral, some are naturally Hilbertian, and some are better phrased in terms of operator approximations or characters.
Thus one of the first tasks in constructive Peter--Weyl theory is to distinguish the classical package into separate components.

\begin{remark}
For later purposes it is useful to think of Peter--Weyl theory as consisting of at least four ingredients: compactness, invariant integration, finite-dimensional approximation, and representation-theoretic separation.
A later generalization to loops or quasigroups may preserve some of these ingredients while losing others.
\end{remark}

\section{Why Constructive Reformulation Is Needed}\label{sec:why-reformulation}

\begin{remark}[Foundational stance]\label{rem:foundational-stance}
Throughout this note, ``constructive'' should be read primarily in the Bishop-style sense explained in the introduction, with auxiliary use of locale theory and formal topology when constructive compactness or constructive Gelfand representation is discussed.
Thus the present note is not intended as a survey of all forms of constructivism.
In particular, Russian recursive mathematics, proof-mining approaches, Weihrauch-theoretic computable analysis, and type-theoretic formalizations are not treated here as parallel frameworks on an equal footing, even though they may eventually provide useful further perspectives on Peter--Weyl-type phenomena.
\end{remark}

In classical expositions, the Peter--Weyl theorem may be reached through compact self-adjoint operators, spectral theory, maximality arguments, or representation-theoretic arguments involving irreducibility and orthogonality.
Constructively, such arguments are not automatically available in the same form.
Even when the final theorem remains valid, the route to it may have to be reorganized.

A second issue is that the classical theorem is often stated in a manner tailored to completed Hilbert spaces and existence theorems for irreducible representations.
Constructive mathematics tends to prefer formulations with clearer computational or approximation content.
This is one reason why constructive versions of Peter--Weyl often emphasize approximation by explicitly described finite-rank operators, characters, almost periodic functions, or compact completions associated with a given function.

A third issue is methodological.
Constructive mathematics frequently reveals that a classical theorem really contains several distinct layers.
One layer may be approximation-theoretic, another topological, another spectral, and another dependent on a specific choice principle.
By forcing these layers to be separated, the constructive viewpoint often makes the structure of a theorem more transparent even for readers whose primary interests are classical.
In this sense, constructive reformulation is not merely a restriction; it can also be a conceptual sharpening.

\section{Known Constructive Results}\label{sec:known-results}

The available literature suggests that constructive Peter--Weyl theory should not be described as nonexistent.
Rather, its core already appears in a definite compact-group form, and its surrounding theory extends outward through almost periodic functions, constructive Gelfand theory, and locale-theoretic compactness.
The point of the present section is to display these strands in a somewhat theorem-oriented manner.

\subsection{Compact groups}

For compact groups, the basic known result is that Peter--Weyl admits a genuinely constructive proof \cite{CoquandSpittersPW}.
In the form emphasized by Coquand and Spitters, one works with characters and with approximation operators built from them.
The proof is thus closer in spirit to a constructive approximation theorem than to a purely existential classification theorem.
This is one of the main reasons the constructive version should not be judged by asking only whether it reproduces the most familiar textbook sentence word for word.

To make the discussion more concrete, let us write the convolution of functions on a compact group $G$ as
\[
(f*g)(x):=\int_G f(y)g(y^{-1}x)\,dy,
\]
where $dy$ denotes Haar integration in the compact-group context.
If $\chi$ is a character, one is led to central convolution operators of the form
\[
T_{\chi}(f):=\chi * f.
\]
In the classical theory, suitable normalizations of these operators project onto the isotypic components attached to the representation determined by $\chi$.
The constructive theory should be read as preserving this finite-dimensional approximation mechanism in a form that does not depend on a non-constructive global decomposition being asserted at the outset.

\begin{theorem}[Coquand--Spitters, schematic operator form]\label{thm:CS-schematic}
Let $G$ be a compact group in the constructive setting considered in \cite{CoquandSpittersPW}.
Then there is a directed family of finite-rank operators $P_F$ on the relevant function space, indexed by finite character-theoretic data $F$, such that each $P_F$ is built from convolution with central functions arising from characters and such that, for each function $f$ in the Peter--Weyl class under consideration,
\[
P_F(f)\longrightarrow f
\]
in the constructive mode of approximation used in \cite{CoquandSpittersPW}.
Moreover, the image of each $P_F$ lies in a finite-dimensional subspace generated by representation-theoretic coefficient data.
\end{theorem}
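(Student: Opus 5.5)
The plan is to follow the Coquand--Spitters strategy. Rather than decompose $L^2(G)$ first, we build the approximating operators directly from approximate identities and then read off their finite-rank and character-theoretic structure.

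\textbf{Step 1: approximate identities.} Using uniform continuity of each $f\in C(G)$ and the Haar integral, we produce, for each modulus of continuity datum, a nonnegative central function $u$ with $\int_G u=1$ such that $\|u*f-f\|_\infty$ is small. Centrality comes from averaging a bump function over conjugation, $u(x)=\int_G v(yxy^{-1})\,dy$. This step is constructive: the estimate depends only on the modulus of uniform continuity of $f$ and on compactness in the totally-bounded sense.

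\textbf{Step 2: finite-rank replacement.} Convolution by $u$ is a compact self-adjoint operator on $L^2(G)$ that commutes with left and right translations. Classically one would take its eigenspaces. Constructively, the plan is to approximate $u$ in $L^2$ by finite sums $\sum_j c_j\chi_j$ of characters. Concretely:
\begin{itemize}
\item[(i)] Approximate $u$ uniformly by functions in the span of matrix coefficients of finite-dimensional unitary representations, where these representations are extracted as spectral subspaces of the convolution operator $K_u$.
\item[(ii)] Central projection then turns matrix coefficients into characters, because averaging a matrix coefficient over conjugation yields a multiple of a character.
\end{itemize}
The index $F$ records the finite set of characters together with the accuracy. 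Directedness follows by enlarging $F$, since character convolutions are orthogonal idempotents up to normalization: $d_\chi\,\chi*\chi=\chi$ and $\chi*\chi'=0$ for $\chi\neq\chi'$. Set $P_F(f)=\sum_{\chi\in F} d_\chi\,\chi*f$. Its image lies in the span of the matrix coefficients of the representations in $F$, which is finite-dimensional. This gives the final clause of the statement.

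\textbf{Step 3: convergence.} We show $\|P_F f-f\|\to0$ by comparing $P_F f$ with $u*f$. Write
\[
P_F f-f=(P_F f-P_F(u*f))+(P_F(u*f)-u*f)+(u*f-f).
\]
The last term is controlled by Step 1. The middle term is small because $u$ is almost in the range of $P_F$. The first term is bounded using $\|P_F\|\le 1$ in $L^2$, since $P_F$ is an orthogonal projection. Uniform convergence, when the Peter--Weyl class consists of continuous functions, then follows from $L^2$ convergence by one more convolution with $u$, using $\|u*g\|_\infty\le\|u\|_2\|g\|_2$.

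\textbf{Main obstacle.} I expect the hard step to be Step 2(i): extracting finite-dimensional spectral subspaces of $K_u$ constructively. Eigenvalues cannot be decided to be isolated. I would choose spectral cut-off levels $\varepsilon$ avoiding the spectrum in the located sense, via constructive functional calculus for compact self-adjoint operators. The spectral projection $1_{[\varepsilon,\infty)}(K_u)$ then has finite, located rank and is translation-invariant. The obstruction is ensuring that such good cut-offs exist densely; I would obtain this by a counting argument, since only finitely many eigenvalues exceed $\varepsilon$ in total multiplicity.
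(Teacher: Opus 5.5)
\textbf{Main gap: the step to individual irreducible characters.} The paper does not reprove this statement. Its ``proof'' is only an explanation deferring to Coquand--Spitters, whose argument runs through the constructive Gelfand representation of the commutative convolution algebra of central functions, a continuous version of Burnside's algorithm for computing characters. Your outline instead reconstructs the textbook compact-operator route, and it has a genuine gap exactly where that Gelfand step is needed.

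In Step~2 the spectral subspaces $E_\varepsilon$ of $K_u$ are bi-invariant but in general reducible. $K_u$ acts on each $\pi$-isotypic block by a scalar, so $E_\varepsilon$ is a sum of several isotypic blocks, and distinct $\pi$ can even share an eigenvalue. Averaging a coefficient of such a representation over conjugation gives an element of the centre $Z(E_\varepsilon)$. That element is a combination $\sum_\pi c_\pi\chi_\pi$ of \emph{several} irreducible characters, not a multiple of one; your claim in (ii) holds only for irreducible representations.

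Yet your definition $P_F=\sum_{\chi\in F}d_\chi\,\chi*(\cdot)$ and the relations $d_\chi\,\chi*\chi=\chi$, $\chi*\chi'=0$, $\|P_F\|\le 1$ all require $F$ to consist of distinct irreducible characters. For the character $\chi_1+\chi_2$ of a reducible representation the idempotent relation already fails. So you must split $Z(E_\varepsilon)$ into its minimal idempotents $d_\chi\chi$, which means computing the Gelfand spectrum of a finite-dimensional commutative $C^*$-algebra whose generators may have coinciding eigenvalues. Classically this is complete reducibility, but its usual proof (``choose a nonzero proper invariant subspace'') is not constructive, and your proposal never addresses it. By comparison, the located spectral cut-off you single out as the main obstacle is a comparatively standard piece of constructive functional calculus for compact self-adjoint operators.

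You have two ways to repair this:
\begin{enumerate}
  \item Supply a constructive argument for this finite-dimensional Gelfand step.
  \item Drop irreducibility. The spectral projections of $K_u$ at good cut-offs are themselves convolutions by central idempotents $e$, and their finite joins form a directed family under $e\vee e'=e+e'-e*e'$. Step~3 goes through essentially unchanged with these. However, the indexing data are then not yet individual characters, which is precisely what the character-theoretic formulation requires.
\end{enumerate}

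\textbf{Second problem: the uniform-convergence claim is false as stated.} The estimate $\|u*g\|_\infty\le\|u\|_2\|g\|_2$ only gives $u*P_Ff\to u*f$ uniformly. That is uniform convergence of the smoothed approximants $P_F(u*f)$, with $F$ chosen after $u$. To get back to $P_Ff$ itself you would need the $P_F$ to be uniformly bounded on $C(G)$, and they are not. For $G=\mathbb{T}$ and $F=\{-N,\dots,N\}$, $P_F$ is the Dirichlet partial-sum operator, whose norm on $C(\mathbb{T})$ grows like $\log N$, and there are continuous functions whose Fourier series do not converge uniformly.

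So in the sup norm the approximants must be Fej\'er-type operators $P_F\circ K_u$. Because $\|u\|_2$ blows up as $u$ sharpens, these converge only when $F$ is refined sufficiently relative to $u$, not along the naive product order. The $L^2$ statement is what your argument actually proves for $P_F$, and only once the irreducible characters have been produced.
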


\begin{proof}[Explanation]
This is a schematic restatement of the role played by the main theorem of \cite{CoquandSpittersPW}.
The point is not to reproduce the original theorem verbatim, but to isolate the mathematically useful form of the result: approximation by finite-rank operators assembled from character-theoretic convolution kernels.
In particular, one should think of $P_F$ as a finite partial Peter--Weyl approximant.
\end{proof}

A particularly useful heuristic consequence is that the constructive theorem appears to preserve the \emph{approximation package} of Peter--Weyl more directly than the full rhetoric of simultaneous irreducible decomposition.
That does not mean that representation theory disappears; rather, it is reorganized around the parts that can be handled constructively and functorially.

\begin{remark}
A useful working slogan is the following: constructively, Peter--Weyl is not primarily a theorem about ``all irreducibles existing at once,'' but a theorem about the possibility of approximating continuous functions in a representation-theoretically meaningful way.
The papers \cite{CoquandSpittersPW,SpittersAP} strongly support this reading.
\end{remark}

The classical picture suggests the following more explicit model for the operators in \Cref{thm:CS-schematic}.
If $F$ is a finite family of irreducible characters and if $e_\chi$ denotes a suitable normalized central kernel attached to $\chi$, then one formally considers
\[
P_F(f)=\sum_{\chi\in F} e_\chi * f.
\]
Classically, such an operator has finite-dimensional image and recovers the sum of the corresponding isotypic pieces.
Constructively, the important point is not the global decomposition formula itself but the existence of these finite approximation stages and their convergence in the relevant sense.

\begin{proposition}[Finite-stage approximation principle]\label{prop:finite-stage}
In the compact-group constructive setting, the Peter--Weyl phenomenon may be organized as a net of finite-stage approximations
\[
f \mapsto P_F(f),
\]
where each $P_F(f)$ belongs to a finite-dimensional subspace generated by finitely many representation-theoretic coefficients, and where the directed refinement of $F$ improves the approximation to $f$.
\end{proposition}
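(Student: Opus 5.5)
This proposition is essentially a repackaging of \Cref{thm:CS-schematic}, so the plan is to derive it from that theorem rather than to reprove any harmonic analysis. First I would fix the index set: take the directed set of finite character-theoretic data $F$ furnished by \Cref{thm:CS-schematic}, ordered by inclusion (refinement). Directedness is immediate, since the union of two finite families is again finite and refines both. The net in question is then $F \mapsto P_F(f)$ for a fixed $f$ in the Peter--Weyl class. Its convergence to $f$, in the constructive mode of approximation of \cite{CoquandSpittersPW}, is exactly the content of the displayed limit in \Cref{thm:CS-schematic}.

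Second, I would verify the finite-dimensionality clause. By \Cref{thm:CS-schematic}, each $P_F$ is finite rank, and its image lies in a finite-dimensional subspace generated by representation-theoretic coefficient data. For the explicit model $P_F(f)=\sum_{\chi\in F} e_\chi * f$, I would also check this directly. For a character $\chi$ of a finite-dimensional representation $\pi$, we have $\chi(y^{-1}x)=\sum_{i,j}\pi_{ij}(y^{-1})\pi_{ji}(x)$. So $(\chi*f)(x)$ is a finite linear combination of the coefficients $\pi_{ji}(x)$, with scalar weights $\int_G f(y)\,\overline{\pi_{ji}(y)}\,dy$ (using unitarity). These weights are constructively computable integrals against Haar measure. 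This exhibits $P_F(f)$ in the span of the finitely many matrix coefficients of the representations indexed by $F$, without appealing to any global decomposition.

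Third, I would address the phrase ``directed refinement improves the approximation''. Literal monotonicity $\|f-P_{F'}(f)\|\le\|f-P_F(f)\|$ for $F\subseteq F'$ holds classically in $L^2$, since there $P_F$ is an orthogonal projection onto nested subspaces. In the uniform norm it need not hold. I would therefore read ``improves'' as the net-convergence statement: for every $\varepsilon>0$ there is an $F_0$ such that every $F\supseteq F_0$ gives error below $\varepsilon$. In the $L^2$ setting I would add the monotone version, obtained from orthogonality of the $e_\chi * f$ for distinct characters.

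The main obstacle is not computational but interpretive. One must match ``directed refinement'' and ``the relevant sense of convergence'' to what \cite{CoquandSpittersPW} actually provides constructively. In particular, one needs a modulus, meaning an explicit $F_0$ for given $\varepsilon$, rather than a mere classical limit. I would try first to extract that modulus from the Coquand--Spitters approximation argument, which builds approximants from an approximate identity convolved against $f$. From this I would argue that each approximant is already of the form $P_{F_0}(f)$ up to $\varepsilon$.
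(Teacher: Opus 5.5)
Your first paragraph is the paper's proof: the paper simply declares the proposition an immediate reformulation of \Cref{thm:CS-schematic}. Your second paragraph, expanding $\chi*f$ into the coefficients $\pi_{ji}$, is a correct optional supplement. The trouble is in your third and fourth paragraphs, where you go beyond the paper. There one claim is wrong, and your modulus-extraction step fails as stated.

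In the uniform norm, the unweighted operators $P_F(f)=\sum_{\chi\in F}e_\chi*f$ do not just fail to be monotone: the net need not converge at all. For $G=\mathbb{T}$ one has $P_F(f)=\sum_{n\in F}\hat f(n)e^{inx}$. The sets $\{-N,\dots,N\}$ are cofinal among finite $F$, so uniform convergence of the net for all continuous $f$ would force uniform convergence of every continuous function's Fourier series. That contradicts du Bois-Reymond. Equivalently, $\|P_F\|_{C(G)\to C(G)}=\|\sum_{\chi\in F}e_\chi\|_{L^1}$ is unbounded in $F$.

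Hence your step ``each approximant is already of the form $P_{F_0}(f)$ up to $\varepsilon$'' cannot work in $C(G)$. First replace the approximate identity by a finite character combination $k=\sum_{\chi\in F_0}c_\chi e_\chi$. The approximant is then $k*f$, with weights $c_\chi$ generally different from $1$. Comparing it with $P_{F_0}(f)=k*f+P_{F_0}(f-k*f)$ costs the unbounded factor $\|P_{F_0}\|$.

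In $L^2$ the step is fine, but for a reason you should state explicitly. Take $F$ a finite set of distinct irreducible characters. Then $P_F$ is the orthogonal projection onto the span $V_F$ of the corresponding coefficients. Whenever $F\supseteq F_0$ we have $k*f\in V_{F_0}\subseteq V_F$, so best approximation gives $\|f-P_F(f)\|_2\le\|f-k*f\|_2$. That is exactly the modulus you are after. For uniform approximation you must instead keep weighted, Fej\'er-type operators $f\mapsto k*f$ whose weights change from stage to stage. You cannot use partial sums that improve under refinement of $F$.
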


\begin{proof}
This is an immediate reformulation of \Cref{thm:CS-schematic}.
The proposition is stated separately because it isolates the part of the theorem that is most useful for later comparison with almost periodic compactifications and with prospective loop-theoretic analogues.
\end{proof}

\begin{proposition}[Constructive emphasis of the compact-group theory]\label{prop:compact-emphasis}
At the level of mathematical exposition, the constructive compact-group theory naturally privileges the following three features over a purely decomposition-theoretic formulation:
\begin{enumerate}
  \item explicit finite-rank approximants,
  \item character-theoretic organization of central approximation data,
  \item compatibility with constructive Gelfand-type representation results.
\end{enumerate}
\end{proposition}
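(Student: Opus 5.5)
The statement is expository rather than a sharp mathematical assertion, so I will argue for each of its three items separately. In each case I will trace the feature back to a result or construction already recorded in the paper. I will also give one reason why the competing "purely decomposition-theoretic" formulation is weaker constructively, meaning that it cannot serve as the primary organizing statement.

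For item (1), I will invoke \Cref{thm:CS-schematic} and its reformulation \Cref{prop:finite-stage}. These give a directed family of finite-rank operators $P_F$ with $P_F(f)\to f$, each assembled from convolutions with central kernels. This already exhibits explicit finite-rank approximants as the form in which the theorem is delivered.

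The contrast with decomposition is made as follows. A Hilbert direct-sum decomposition of $L^2(G)$ asserts the simultaneous existence of all irreducible summands. It also asserts the existence of orthogonal projections onto the corresponding closed subspaces. Constructively, the existence of such a projection onto a closed subspace is not automatic: it requires the subspace to be located. The approximants $P_F$, by contrast, are given by explicit formulas $P_F(f)=\sum_{\chi\in F}e_\chi*f$. Hence they are the primary objects, and decomposition data is at best derived from them.

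For item (2), I will use the observation preceding \Cref{thm:CS-schematic}. The operators $T_\chi(f)=\chi*f$ are central and are indexed by characters. So the index set $F$ of the approximation net consists of finite character-theoretic data. The organization of the central approximation data is therefore character-theoretic by construction. No separate enumeration of the irreducible representations themselves is needed, only of the class functions.

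Item (3) is the step I expect to be the main obstacle. Nothing stated earlier in the paper establishes it; it appears only as a pointer to Gelfand-type results. My plan is to argue structurally. The uniform closure of the span of the coefficient functions produced by the $P_F$ is a commutative unital $*$-subalgebra of $C(G)$. A constructive Gelfand representation (in the Coquand--Spitters locale-theoretic form) applies to such algebras. It identifies them with continuous functions on a compact regular locale, which plays the role of a compact completion. The density in \Cref{thm:CS-schematic} is then what makes this representation faithful on the original function class.

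Making this precise requires fixing which algebra and which constructive Gelfand theorem are meant. Since the proposition is framed "at the level of mathematical exposition," I would settle for exhibiting this compatibility rather than proving a formal equivalence. I would state explicitly that item (3) is a consistency claim resting on the cited literature rather than a consequence of the earlier results alone.
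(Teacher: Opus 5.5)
Your treatment of items (1) and (2) matches what the paper does, only more explicitly. The paper's proof is a one-line appeal to \cite{CoquandSpittersPW,CoquandSpittersGelfand}, and it calls the proposition an interpretive synthesis. Your tracing of (1) and (2) back to \Cref{thm:CS-schematic}, \Cref{prop:finite-stage} and the central operators $T_\chi$ is a harmless and useful elaboration of that. So is your remark that projections onto closed subspaces exist constructively only for located subspaces. Your fallback for (3), that it rests on the cited literature, is also exactly the paper's position.

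The structural sketch you offer for (3), however, uses the wrong algebra and runs in the wrong logical direction. In \cite{CoquandSpittersPW} the constructive Gelfand theorem is applied to the commutative algebra generated by convolution with central functions. This is the analogue of the centre of the group algebra, and it is the sense in which that proof generalizes Burnside's algorithm. The character data are then read off from its Gelfand spectrum. That is precisely why items (2) and (3) belong together, and it makes Gelfand theory an input to the approximation theorem rather than a consequence of it.

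Your sketch instead takes the pointwise-product algebra spanned by coefficient functions and feeds in the density from \Cref{thm:CS-schematic}. Granting the uniform density you invoke, the closure of that algebra is all of $C(G)$, so its spectrum just recovers $G$ and no compact completion appears. Moreover, the Gelfand representation of a commutative $C^*$-algebra is isometric in any case, so density is not what makes it faithful. The picture of an algebra of functions producing a compact completion belongs to the almost periodic strand (\Cref{prop:compactification-principle}), not to the compact-group theory. Either rebuild the sketch around the central convolution algebra, or drop it and rest (3) on the citation, as the paper does.
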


\begin{proof}
This is a synthesis of how \cite{CoquandSpittersPW,CoquandSpittersGelfand} are used and described in the surrounding literature.
The proposition is interpretive rather than new, but it accurately records the emphasis that repeatedly appears in the constructive treatment.
\end{proof}

\subsection{Almost periodic functions and compact completions}

A second major strand appears in Spitters's work on almost periodic functions \cite{SpittersAP}.
There the compact-group Peter--Weyl theorem is not treated as an isolated endpoint.
Instead, it becomes a tool inside a broader construction.
The guiding idea is that, for a general topological group and a suitable almost periodic function $f$, one can attach a compact completion (or compact group) through which $f$ factors; constructive Peter--Weyl theory is then applied on that compact object.

This is conceptually important for at least two reasons.
First, it shows that constructive harmonic analysis naturally shifts attention from the ambient group itself to compact objects canonically associated with functions or translation actions.
Second, it suggests that the real constructive analogue of Peter--Weyl may sometimes be a compactification principle together with a finite-dimensional approximation theorem on the resulting compact object.
In this sense, the almost periodic viewpoint is not peripheral; it may be one of the clearest windows into what the theorem means constructively.

\begin{proposition}[Compactification principle, schematic form]\label{prop:compactification-principle}
Let $G$ be a topological group and let $f$ be an almost periodic function in the constructive sense of \cite{SpittersAP}.
Then the translation behaviour of $f$ determines a compact completion or compact group through which $f$ factors, and the compact-group constructive Peter--Weyl theorem may then be applied on that compact object.
\end{proposition}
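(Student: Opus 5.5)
The plan is to carry out the standard Bohr-type compactification construction, adapted to the Bishop-style setting of \cite{SpittersAP}. I will use the totally bounded uniform structure induced by the translates of $f$.

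\textbf{Step 1: the pseudometric.} For $f$ almost periodic on $G$, I define
\[
d_f(x,y):=\sup_{z\in G}\lvert f(zx)-f(zy)\rvert .
\]
This is a left-invariant pseudometric on $G$. Constructively the supremum must exist as a real number. I will obtain this from the constructive definition of almost periodicity in \cite{SpittersAP}, which I take to say that the set of translates $\{f(z\,\cdot)\}$ is totally bounded in the uniform norm. Given total boundedness, the supremum is located by taking a finite $\varepsilon$-net of translates and maximizing over it.

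\textbf{Step 2: total boundedness of $(G,d_f)$.} The key point is that the map $x\mapsto$ (right translate of $f$ by $x$) is an isometry from $(G,d_f)$ into $C_b(G)$ with the sup norm. Its image is totally bounded, because left and right almost periodicity coincide. I expect this coincidence to be the main obstacle. Classically it is a known theorem, but constructively I would rather build it into the definition, or prove it via the finite-net argument in \cite{SpittersAP}. I would first try the direct route: a finite $\varepsilon$-net of left translates yields a finite $2\varepsilon$-net of right translates, by the usual double-translate argument in which one considers the functions $(x,y)\mapsto f(xy)$.

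\textbf{Step 3: the completion.} I form the metric quotient of $(G,d_f)$ and take its Bishop completion $K_f$. A complete, totally bounded metric space is compact in Bishop's sense.

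\textbf{Step 4: the group structure.} Multiplication and inversion are uniformly continuous for $d_f$:
\begin{itemize}
\item left invariance handles multiplication on one side;
\item the other side, and inversion, use the two-sided almost periodicity from Step 2.
\end{itemize}
Both operations therefore extend to $K_f$, making it a compact metric group. Note that the underlying metric makes sense without any locale-theoretic detour.

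\textbf{Step 5: factorization.} The inequality $\lvert f(x)-f(y)\rvert\le d_f(x,y)$ (take $z=e$) shows that $f$ is $1$-Lipschitz for $d_f$. Hence $f$ extends uniquely to a continuous function $\tilde f$ on $K_f$ with $f=\tilde f\circ\iota$, where $\iota\colon G\to K_f$ is the canonical map.

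\textbf{Step 6: applying Peter--Weyl.} Finally, I apply \Cref{thm:CS-schematic} to $K_f$ and $\tilde f$. This gives finite-stage approximants $P_F(\tilde f)\to\tilde f$ on $K_f$. Pulling them back along $\iota$ yields the approximation statement of \Cref{prop:finite-stage} for $f$ on $G$.

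The one further requirement is that $K_f$ meets the hypotheses of \cite{CoquandSpittersPW}. I expect this to be routine once $K_f$ is a compact metric group: Haar integration on a compact metric group is available constructively, and in particular the invariant mean on almost periodic functions provides it.
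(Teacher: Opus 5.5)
Your overall architecture (a pseudometric from translates, Bishop completion, a compact metric group, factorization of $f$, then Coquand--Spitters on the compact side) is the strategy the paper attributes to \cite{SpittersAP}. However, Step~2 has a genuine gap, and Step~4 inherits it.

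With your left-invariant $d_f(x,y)=\sup_z\lvert f(zx)-f(zy)\rvert$, total boundedness of $(G,d_f)$ is \emph{right} almost periodicity. The definition you adopt in Step~1 gives only left almost periodicity. Deriving the former from the latter is exactly the implication that the paper, right after this proposition, says is not constructively settled in \cite{SpittersAP} (cf.\ Question~\ref{q:left-right-ap}). So there is no finite-net argument there to quote. The classical double-translate argument also breaks at an identifiable point. From an $\varepsilon$-net $L_{a_1}f,\dots,L_{a_n}f$ of left translates, one must pick $b_1,\dots,b_m\in G$ whose vectors $(f(a_1b_j),\dots,f(a_nb_j))$ form an $\varepsilon$-net of $S=\{(f(a_1b),\dots,f(a_nb)) : b\in G\}\subseteq\mathbb{C}^n$. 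Only then does $\lvert f(xb)-f(xb_j)\rvert<3\varepsilon$ follow for all $x$. Choosing the $b_j$ uses that bounded subsets of $\mathbb{C}^n$ are totally bounded, which is not constructively valid. A subset of a totally bounded set is totally bounded only when it is located, and for $\{0\}\cup\{1 : P\}$ total boundedness already decides $P$.

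The repair is your first fallback, stated precisely. The hypothesis ``almost periodic in the sense of \cite{SpittersAP}'' has to be read as supplying the two-sided uniformity directly: the existence of the relevant suprema and the total boundedness of $G$ for the very pseudometric you complete. It cannot be one-sided data from which this is derived by exchanging the roles of the variables, because that exchange is exactly where the locatedness problem reappears.

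It is then better to complete a bi-invariant pseudometric such as $\rho_f(x,y)=\sup_{a,b\in G}\lvert f(axb)-f(ayb)\rvert$. Bi-invariance gives $\rho_f(xy,x'y')\le\rho_f(x,x')+\rho_f(y,y')$ and $\rho_f(x^{-1},y^{-1})=\rho_f(x,y)$, so Step~4 becomes immediate. With your one-sided $d_f$, by contrast, uniform continuity of right multiplication and inversion again rests on the unproved two-sided control.

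Steps~3 and~5 then go through as written, using $\lvert f(x)-f(y)\rvert\le\rho_f(x,y)$. In Step~6, cite constructive Haar measure on compact metric groups directly. Appealing to the invariant mean on almost periodic functions is circular if the mean is itself to be obtained from $K_f$.
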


\begin{proof}[Explanation]
This summarizes the strategy of \cite{SpittersAP}: the almost periodic function is used to define a pseudometric and hence a completion carrying the relevant compact-group structure.
The constructive Peter--Weyl theorem then enters on that associated compact group rather than directly on the original ambient group.
\end{proof}

It is also noteworthy that Spitters explicitly points out at least one surrounding classical statement whose constructive status is not fully settled in the paper, namely the classical implication from left almost periodicity to right almost periodicity.
This makes the paper especially valuable for our purposes, since it not only develops positive constructive results but also marks a boundary line where further clarification still seems needed.

\begin{corollary}[Methodological consequence]\label{cor:AP-method}
For general topological groups, a useful constructive replacement for a direct global Peter--Weyl statement is often a two-step procedure:
first pass to a compact object canonically attached to the given translation data, and then apply compact-group Peter--Weyl theory there.
\end{corollary}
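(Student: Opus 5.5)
The corollary is a methodological consequence, so the argument is a chaining of \Cref{prop:compactification-principle} with \Cref{thm:CS-schematic} (equivalently \Cref{prop:finite-stage}); no new analytic input is needed. Fix a topological group $G$ and an almost periodic function $f$ on $G$ in the sense of \cite{SpittersAP}. The first step invokes \Cref{prop:compactification-principle}. From the translation data of $f$ we form the pseudometric
\[
d_f(x,y):=\sup_{z\in G}\lvert f(zx)-f(zy)\rvert,
\]
or its two-sided variant, pass to the metric quotient, and complete it to a compact group $K_f$. We also record a continuous homomorphism $\iota\colon G\to K_f$ with dense image and a continuous function $\tilde f$ on $K_f$ such that $f=\tilde f\circ\iota$. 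This step is the ``compact object canonically attached to the given translation data'' in the corollary. Canonicity here means only that $K_f$ is determined by $f$ up to the evident isomorphism of completions, and that is all the corollary needs.

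The second step applies \Cref{thm:CS-schematic} on $K_f$. This yields a directed family of finite-rank operators $P_F$, built from character kernels $e_\chi$, with $P_F(\tilde f)\to\tilde f$ in the constructive mode of approximation of \cite{CoquandSpittersPW}. The third step pulls back along $\iota$: set $f_F:=P_F(\tilde f)\circ\iota$. Each $f_F$ lies in the finite-dimensional span of the functions $u\circ\iota$, where $u$ ranges over coefficients of finite-dimensional representations of $K_f$. These composites are coefficients of finite-dimensional continuous representations of $G$. Since uniform approximation on $K_f$ restricts to uniform approximation on the image of $\iota$, we get $f_F\to f$ uniformly on $G$. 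This is the ``direct global Peter--Weyl statement'' recovered by the two-step procedure, and it justifies calling the procedure a replacement.

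The main obstacle is the first step: checking that the completion of $(G/d_f, d_f)$ is constructively a compact group and not merely complete. Constructive compactness means complete and totally bounded, so total boundedness of $d_f$ must come with explicit finite $\varepsilon$-nets. That is precisely the content of the constructive definition of almost periodicity in \cite{SpittersAP}, and I would take it from there rather than reprove it. A further subtlety is the left/right issue mentioned above: if only left almost periodicity is assumed, continuity of multiplication on the completion may fail to be constructively available. So I would state the corollary for functions satisfying whichever (two-sided) almost periodicity hypothesis \cite{SpittersAP} actually uses, and flag the one-sided case as outside its scope. Once these two points are granted, the remaining steps are routine bookkeeping, and the word ``often'' in the corollary reflects exactly this dependence on the hypothesis under which the compactification exists.
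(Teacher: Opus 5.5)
Your first two steps are exactly the paper's proof, which simply chains \Cref{prop:compactification-principle} with \Cref{thm:CS-schematic}; that is all the corollary requires. Your additional material contains two errors. Neither affects the corollary itself, but both should be removed or repaired.

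\emph{Your third step does not follow.} \Cref{thm:CS-schematic} leaves the mode of convergence unspecified. For the paper's explicit model $P_F=\sum_{\chi\in F}e_\chi*(\cdot)$, the net $P_F(\tilde f)$ does not converge uniformly for general continuous $\tilde f$. Take $G=\mathbb{R}$ and a continuous periodic $f$ whose Fourier series diverges at some point. Then $K_f$ is a circle group, the $P_F(\tilde f)$ are Fourier partial sums, and the claim $f_F\to f$ uniformly on $G$ is false. Uniform approximation of almost periodic functions by pulled-back coefficients is true, but it needs smoothed approximants rather than the partial-sum net itself. Examples are Fej\'er-type means, or $P_F(u*\tilde f)$ with $u$ from an approximate identity; the expansion of $u*\tilde f$ converges absolutely in the sup norm. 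Either drop step three or restate it in that form.

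\emph{The one-sided pseudometric $d_f$ is not interchangeable with the two-sided one.} One has $d_f(x,y)=0$ exactly when $y^{-1}x$ lies in the right stabilizer of $f$. That stabilizer need not be normal: take $f=1_H$ for a non-normal subgroup $H$ of a finite group. So the metric quotient is in general only a coset space, not a group, and this already fails classically. What you need is $\rho_f(x,y)=\sup_{z,w\in G}\lvert f(zxw)-f(zyw)\rvert$. It is bi-invariant, its kernel is a normal subgroup, and $\rho_f(xy,x'y')\le\rho_f(x,x')+\rho_f(y,y')$. The constructive left/right almost periodicity issue you raise enters when proving $\rho_f$ totally bounded from a one-sided hypothesis, not in the choice between the two pseudometrics.
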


\begin{proof}
This follows immediately from \Cref{prop:compactification-principle} and the compact-group theorem in \Cref{thm:CS-schematic}.
\end{proof}

\subsection{Locales and constructive Gelfand theory}

A third strand comes from constructive Gelfand representation and locale theory \cite{CoquandSpittersGelfand,CoquandFormalTopology,BanaschewskiMulvey}.
Here compact spaces are often replaced by compact locales, and representation theorems naturally live in a point-free setting.
This matters because the proof strategy in \cite{CoquandSpittersPW} is tied to constructive Gelfand theory, and because locale-theoretic compactness often provides the right constructive substitute when point-set compactness is too strong or too choice-dependent.

From this perspective, one of the key lessons is methodological.
Constructive Peter--Weyl theory is entangled with constructive spectral theory.
One does not simply ``repeat the classical proof more carefully.''
Instead, one often proves a representation theorem first in a point-free form and only then asks under what additional hypotheses it can be read as an ordinary theorem about spaces of points, norms, or concrete compact groups.
This is exactly the kind of distinction that later becomes crucial if one hopes to move beyond groups.

\begin{proposition}[Point-free background principle]\label{prop:pointfree-principle}
Constructive Peter--Weyl arguments are naturally supported by point-free representation theorems, especially constructive Gelfand-type results, because compact locales can serve as the primary compact objects before one asks for enough points or normability.
\end{proposition}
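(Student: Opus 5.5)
The proposition is interpretive, like \Cref{prop:compact-emphasis}. So the argument I would give is a justification by tracing dependencies in the cited literature, not a derivation from axioms. The plan has three steps. First, make the claim precise enough to check. Second, locate the point-free ingredient in the proof of \Cref{thm:CS-schematic}. Third, explain why the point-free formulation comes first logically, with points and norms only recovered afterwards.

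\emph{Step 1 (precise reading).} I would read ``naturally supported'' as follows: in the proof strategy of \cite{CoquandSpittersPW}, some step needs a representation of a commutative (normed) $*$-algebra as an algebra of continuous functions on a compact space. Constructively, that step is supplied by the Gelfand duality of \cite{CoquandSpittersGelfand}, whose spectrum is a compact regular locale (equivalently, a compact formal topology in the sense of \cite{CoquandFormalTopology}). I would then take the relevant algebra to be the commutative algebra of central functions, or of convolution operators by central functions. Its characters play the role of the irreducible characters organising the operators $P_F$ in \Cref{thm:CS-schematic}.

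\emph{Step 2 (locating the ingredient).} I would argue that the finite-stage approximants of \Cref{prop:finite-stage} come from a spectral resolution of this commutative algebra. Classically one takes the spectrum as a set of points. Constructively, points of the spectrum need not be available, because forming them requires a maximal-ideal or choice argument. The locale, by contrast, is always available, and Banaschewski--Mulvey/Coquand--Spitters style results give compactness without choice. So this step uses the spectrum only through its opens and through the Gelfand transform, and that is exactly what ``compact locale as primary compact object'' means.

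\emph{Step 3 (points and normability second).} This step makes the ``before one asks'' clause precise. Passing from the locale to an ordinary compact space of characters, or to a concrete sup-norm rather than a seminorm defined through the locale, requires extra hypotheses such as locatedness, separability, or the existence of enough points. Those hypotheses are added only afterwards. The compactification route of \Cref{prop:compactification-principle} has the same shape: a completion is built first and read as a concrete compact group only later.

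I expect Step 2 to be the main obstacle. The survey has deliberately kept \Cref{thm:CS-schematic} schematic, so without the original paper one cannot certify exactly where the Gelfand spectrum enters the argument. If the attribution turns out to be looser than claimed, I would fall back on the statement as a synthesis, in the same way as in the proof of \Cref{prop:compact-emphasis}. The proposition would then record the methodological role of \cite{CoquandSpittersGelfand}, and it would not be a new theorem.
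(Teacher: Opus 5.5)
Your proposal is correct and takes essentially the paper's approach. The paper's proof is itself only a conceptual summary: it justifies the proposition by the role that the point-free Gelfand and formal-topology results of \cite{CoquandSpittersGelfand,CoquandFormalTopology,BanaschewskiMulvey} play in the background of \cite{CoquandSpittersPW}, which is exactly your stated fallback. Your Steps 1--3 add a more specific dependency trace (the commutative algebra of central functions, its localic spectrum, and points or normability recovered afterwards) that the paper does not spell out and, as you note, cannot certify from the survey alone; it is consistent with the paper's description and leaves the claim interpretive.
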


\begin{proof}
This is a conceptual summary of the role played by \cite{CoquandSpittersGelfand,CoquandFormalTopology,BanaschewskiMulvey} in the constructive background of harmonic analysis.
It explains why locale-theoretic compactness is not merely ancillary but part of the architecture of the subject.
\end{proof}

\section{A Working Dictionary: Classical vs. Constructive}\label{sec:dictionary}

This section records a provisional dictionary between common classical formulations and their constructive counterparts.
The point is not that each classical statement has a perfect constructive twin, but rather that the role played by a statement in classical theory is often taken over by a nearby statement in the constructive setting.
What was expressed informally in earlier sections can now be summarized in a slightly more formula-oriented way.

\begin{center}
\begin{tabular}{p{0.43\textwidth}p{0.48\textwidth}}
\hline
\textbf{Classical formulation} & \textbf{Constructive tendency} \\
\hline
Irreducible unitary representations classify finite-dimensional pieces & Use characters, finite-rank approximation operators, or explicit approximation statements \\
Matrix coefficients are dense in $C(G)$ & Density may be reformulated through approximants built from constructive representation data \\
Hilbert direct-sum decomposition of $L^2(G)$ & Approximation in $L^2$ or in terms of finite-rank operators is often more primary \\
Compact Hausdorff space of points & Compact locale may appear before enough points are available \\
Global theorem for a topological group & Pass to an almost periodic compact completion attached to the function or the group \\
\hline
\end{tabular}
\end{center}

The first three rows may be compressed into the following symbolic comparison.
Classically, one writes the Peter--Weyl approximation package as
\[
L^2(G)=\widehat{\bigoplus}_{\pi\in \widehat G} H_\pi,
\qquad
f \sim \sum_{\pi\in \widehat G} f_\pi,
\qquad
P_F(f)=\sum_{\pi\in F} f_\pi,
\]
where $F$ ranges over finite subsets of the unitary dual and each $P_F$ is the partial sum operator onto a finite-dimensional isotypic piece.
Constructively, the decomposition display is typically replaced by the approximation display
\[
P_F(f)=\sum_{\chi\in F} e_\chi * f \longrightarrow f,
\]
where $F$ ranges over finite character data and the emphasis falls on the existence and convergence of the finite-stage approximants rather than on the prior assertion of a global direct-sum decomposition.

\begin{proposition}[Interpretation of the dictionary]\label{prop:dictionary}
The table above should be read as a correspondence of \emph{roles} rather than of literal theorem statements.
In particular, constructive Peter--Weyl theory often preserves the operative content of a classical assertion while changing its ambient language from decomposition to approximation, or from point-set compactness to locale-theoretic compactness.
\end{proposition}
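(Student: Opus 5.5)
The plan is to justify \Cref{prop:dictionary} row by row. For each row of the table I will name the classical assertion and the constructive statement established earlier in the note, and then check that the constructive statement carries out the same \emph{function} in the overall Peter--Weyl package. The package was split in \cref{sec:classical-package} into four ingredients: compactness, invariant integration, finite-dimensional approximation, and representation-theoretic separation. For each row I will say which ingredient the classical statement serves and show that the constructive counterpart serves that same ingredient. This is what ``correspondence of roles'' will mean in the proof.

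\textbf{Rows one to three.} These will be handled together through the symbolic comparison displayed just before the proposition. Classically, the partial-sum operators $P_F(f)=\sum_{\pi\in F} f_\pi$ already carry all three uses of the theorem.
\begin{enumerate}
  \item Density of matrix coefficients in $C(G)$ amounts to $P_F(f)\to f$ uniformly for suitable smoothed $f$.
  \item The Hilbert direct-sum decomposition amounts to the $P_F$ being orthogonal projections that converge strongly to the identity.
  \item Classification of the finite-dimensional pieces is recorded by the finite-dimensionality of the images of the $P_F$.
\end{enumerate}
On the constructive side, \Cref{thm:CS-schematic} and \Cref{prop:finite-stage} supply directed families of finite-rank operators built from character kernels $e_\chi$, with the same convergence and finite-dimensional images. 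The operative content therefore survives, and only the language changes, from the decomposition display to the approximation display. \Cref{prop:compact-emphasis} confirms that this is the emphasis in the literature.

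\textbf{Rows four and five.} Row four, point-set versus locale compactness, I will justify by \Cref{prop:pointfree-principle}. There the compact locale plays the role that the compact Hausdorff space of points plays classically: it is the object on which the Gelfand-type representation and the integration are carried out. Row five I will justify by \Cref{prop:compactification-principle} and \Cref{cor:AP-method}. The global theorem for a topological group is classically used to analyse functions through finite-dimensional representations, and the two-step procedure achieves exactly this by factoring $f$ through a compact completion.

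\textbf{Main obstacle.} The hard part is that ``role'' is not a formal notion, so the proposition is interpretive and not provable in a strict sense. The best available strategy is to make ``role'' precise as ``which ingredient of the four-part decomposition the statement supplies, and to which downstream uses it is put.'' Under that definition, correspondence follows from the cited results. I would also state honestly that the converse direction, recovering the full classical statement from the constructive one, is not claimed. For example, the passage from left to right almost periodicity flagged in connection with \cite{SpittersAP} remains open, which is why the proposition speaks of roles and not equivalences.
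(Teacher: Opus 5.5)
Your proposal takes essentially the paper's route: its proof simply points to \Cref{thm:CS-schematic,prop:finite-stage,prop:compactification-principle,prop:pointfree-principle} and to the displayed $P_F$ formulas as exhibiting a reassignment of the classical structural burden, and you make the same appeal row by row, with a more explicit gloss of ``role'' via the four ingredients of \cref{sec:classical-package}. One minor caution: \Cref{thm:CS-schematic} asserts convergence only in the constructive mode of approximation of the cited work and gives no orthogonal-projection property (compare Question~\ref{q:irreducible-scope}), so your phrase ``with the same convergence'' should be softened to the survival of the finite-stage approximation scheme, which is how the paper itself phrases it.
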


\begin{proof}
This is exactly what is exhibited by \Cref{thm:CS-schematic,prop:finite-stage,prop:compactification-principle,prop:pointfree-principle}.
Those results do not merely weaken the classical package; they reassign its structural burden to neighbouring statements that are better behaved constructively.
The displayed formulas above make this reassignment concrete: the classical partial-sum operator $P_F$ survives, but it is now described by explicit convolutional approximants indexed by finite character data.
\end{proof}

\begin{proposition}[Coefficient-space viewpoint]\label{prop:coefficient-viewpoint}
A useful way to compare the classical and constructive settings is to introduce, for finite character data $F$, the finite-dimensional space
\[
V_F:=\operatorname{span}\{e_\chi * f : \chi\in F,\ f\text{ in the ambient function space}\}.
\]
Then the classical theory treats $V_F$ as a partial sum of isotypic components, whereas the constructive theory treats $V_F$ primarily as the image of an explicit finite-rank operator.
\end{proposition}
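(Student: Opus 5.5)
The plan is to split the statement into a classical half and a constructive half. The one genuinely mathematical point is that $V_F$ is finite-dimensional and coincides with the image of $P_F$; the rest follows from \Cref{thm:CS-schematic} and \Cref{prop:finite-stage}.

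\emph{Step one: $V_F$ as the range of $P_F$.} For each $\chi\in F$ I will use the classical normalization $e_\chi=d_\chi\,\overline{\chi}$ (up to the convention used in the paper). Here $d_\chi$ is the degree of the representation $\pi_\chi$, so $e_\chi$ is a central idempotent for convolution. The orthogonality relations for matrix coefficients then give $e_\chi*e_{\chi'}=\delta_{\chi\chi'}e_\chi$. It follows that $P_F=\sum_{\chi\in F}e_\chi*(\cdot)$ is idempotent, and that $e_\chi*P_F(f)=e_\chi*f$ whenever $\chi\in F$. This yields two inclusions:
\[
\operatorname{span}\{e_\chi*f\}\subseteq \operatorname{im}P_F
\quad\text{and}\quad
\operatorname{im}P_F\subseteq \operatorname{span}\{e_\chi*f\}.
\]
Hence $V_F=\operatorname{im}P_F$.

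Finite-dimensionality comes from expanding $\chi(y^{-1}x)=\sum_{i,j}\overline{\pi_{ji}(y)}\,\pi_{ij}(x)$. This shows that every $e_\chi*f$ lies in the span of the $d_\chi^2$ matrix coefficients $\pi_{ij}$. Consequently $\dim V_F\le\sum_{\chi\in F}d_\chi^2$.

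\emph{Step two: the classical reading.} Classically, the $L^2$ projection onto the isotypic component $H_{\pi_\chi}$ is exactly $f\mapsto e_\chi*f$. So $V_F=\bigoplus_{\chi\in F}H_{\pi_\chi}$, which is the partial sum in the displayed decomposition of $L^2(G)$ in \Cref{sec:dictionary}. This is the first half of the statement.

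\emph{Step three: the constructive reading.} Constructively, I will avoid any appeal to the global decomposition. Instead I take $P_F$ to be the finite-rank operator supplied by \Cref{thm:CS-schematic}. Step one used only finitely many explicit integrals and the finite orthogonality relations for the finitely many $\chi\in F$, so it remains valid, and $V_F$ is presented directly as $\operatorname{im}P_F$. The density of $\bigcup_F V_F$ is then exactly the convergence statement of \Cref{prop:finite-stage}; no prior direct-sum assertion is needed.

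The main obstacle is in step three. The orthogonality relations and the idempotence $e_\chi*e_\chi=e_\chi$ must be available constructively for the specific character data used in \cite{CoquandSpittersPW}. The difficulty is that those characters may arise from Gelfand-type spectra rather than from explicit irreducible representations. My first attempt would be to take idempotence and finite rank as part of the output of \Cref{thm:CS-schematic}. If that is not available, I would weaken the claim to say that $V_F$ is contained in the image of an explicit finite-rank operator, which already suffices for the comparison being made.
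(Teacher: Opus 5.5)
Your argument is correct in its classical part, and it takes a genuinely different route from the paper. The paper's proof is pure repackaging. It reads the identification of $V_F$ with the range of $P_F$ straight off the display $P_F(f)=\sum_{\chi\in F}e_\chi*f$ and the finite-rank clause of \Cref{thm:CS-schematic}, and verifies nothing. You instead isolate the mathematics hidden in that identification:
\begin{itemize}
\item the inclusion $\operatorname{im}P_F\subseteq V_F$ is tautological;
\item the reverse inclusion needs the relations $e_\chi*e_{\chi'}=\delta_{\chi\chi'}e_\chi$, used in the form $P_F(e_\chi*f)=e_\chi*f$ (this is the identity that actually gives $V_F\subseteq\operatorname{im}P_F$, rather than $e_\chi*P_F(f)=e_\chi*f$, though both follow from the same relations);
\item finite-dimensionality comes from expanding $\chi$ in the matrix coefficients of $\pi_\chi$, which gives $\dim V_F\le\sum_{\chi\in F}d_\chi^2$.
\end{itemize}
What your route buys is real content for the classical half and a precise location of the constructive difficulty: Schur orthogonality for characters that in \cite{CoquandSpittersPW} arise from a Gelfand spectrum rather than from explicit representations. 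The paper's route never needs these facts, because it claims only a change of viewpoint. The price is that it never actually shows that $V_F$ equals the range of $P_F$.

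Two small repairs. First, your fallback has the inclusion the wrong way round. Without orthogonality, what survives is $\operatorname{im}P_F\subseteq V_F$. The inclusion $V_F\subseteq\operatorname{im}P_F$ is exactly the direction that needs idempotence. Idempotence is also not among the outputs of \Cref{thm:CS-schematic} as stated, so you cannot take it from there. If you want $V_F$ to be exactly the range of an explicit finite-rank operator with no orthogonality input, use $(f_\chi)_{\chi\in F}\mapsto\sum_{\chi\in F}e_\chi*f_\chi$, whose range is $V_F$ by definition.

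Second, a convention point. With the paper's convolution $(f*g)(x)=\int_G f(y)g(y^{-1}x)\,dy$, it is convolution by $d_\chi\chi$, not $d_\chi\overline{\chi}$, that projects onto the span of the $\pi_{ij}$; with $d_\chi\overline{\chi}$ you land in the contragredient component. The expansion should also read $\chi(y^{-1}x)=\sum_{i,j}\overline{\pi_{ji}(y)}\,\pi_{ji}(x)$, applied after using centrality to write $e_\chi*f=f*e_\chi$. Neither correction affects your conclusion.
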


\begin{proof}
This is a direct repackaging of the operator families discussed in \Cref{thm:CS-schematic,prop:finite-stage}.
The point is that the same finite-dimensional subspaces may be recognized from two directions: classically as pieces of a decomposition, constructively as images of controlled approximation operators.
\end{proof}

\begin{remark}
The dictionary is therefore not a translation glossary in the strict logical sense.
It is a guide to which constructive statements should be regarded as carrying the mathematical burden of the corresponding classical ones.
In particular, the replacement
\[
\text{``global decomposition''} \rightsquigarrow \text{``directed family of explicit approximants''}
\]
is not merely terminological; it changes what counts as primary mathematical data.
\end{remark}

\section{What Seems To Be Established}\label{sec:established}

At the present broad level of understanding, the following claims appear to be well supported by the existing literature.
The first item is the compact-group approximation theorem itself, and the remaining items describe how that theorem sits inside a larger constructive package.

\begin{theorem}[Provisional synthesis of established points]\label{thm:established-synthesis}
The following picture is consistent with the current literature.
\begin{enumerate}
  \item Constructive Peter--Weyl theory for compact groups is already genuinely present in the literature, rather than merely conjectural.
  \item Its most stable formulation is approximation-theoretic and character-theoretic, with finite-rank operators playing a central role.
  \item For general topological groups, almost periodic functions provide a natural route to constructive Peter--Weyl-type conclusions through associated compact completions.
  \item Locale-theoretic and constructive Gelfand-theoretic methods are part of the background mechanism that explains why such compact-group arguments can be carried out constructively.
\end{enumerate}
\end{theorem}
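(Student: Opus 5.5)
The statement is a synthesis rather than a new mathematical assertion, so the plan is to prove it item by item. Each of the four clauses is reduced to one of the earlier schematic results, which in turn rest on the cited literature. Nothing beyond what is recorded in \cref{sec:known-results,sec:dictionary} should be needed. The real work is checking that each clause says no more than those results support. The order will follow the enumeration, since each item uses only what precedes it in the note.

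For item (1), I will invoke \Cref{thm:CS-schematic} directly. Its content is that \cite{CoquandSpittersPW} gives, in a Bishop-style setting, a directed family of finite-rank approximants $P_F$ with $P_F(f)\to f$. That is by definition a genuine constructive Peter--Weyl theorem and not a conjecture.

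For item (2), I will combine \Cref{prop:finite-stage} with \Cref{prop:compact-emphasis}. The first shows that the theorem is organized as finite-stage approximation through explicit operators $P_F(f)=\sum_{\chi\in F}e_\chi*f$. The second records that character-theoretic central data and finite-rank approximants are privileged over a decomposition-theoretic formulation. The word ``most stable'' will be read through \Cref{prop:dictionary}: this formulation is the one that carries the operative content of the classical package without presupposing a global direct-sum decomposition.

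For item (3), I will cite \Cref{prop:compactification-principle} and \Cref{cor:AP-method}. An almost periodic $f$ on a topological group $G$ yields, via the pseudometric construction of \cite{SpittersAP}, a compact completion through which $f$ factors. Applying \Cref{thm:CS-schematic} on that compact object then gives Peter--Weyl-type approximation of $f$.

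For item (4), I will use \Cref{prop:pointfree-principle}, together with the observation in the locale subsection that the proof in \cite{CoquandSpittersPW} is tied to the constructive Gelfand representation of \cite{CoquandSpittersGelfand}.

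The main obstacle is not logical derivation but calibration. Items (2) and (4) contain evaluative language (``most stable'', ``explains why''), and a proof cannot establish more than consistency with the literature. That is precisely why the statement says ``consistent with''. I will therefore check that each clause is phrased at the level of generality of the corresponding schematic result. In particular, I will avoid claiming, for instance, that right almost periodicity follows constructively from left almost periodicity, since \cite{SpittersAP} leaves that open. Once this is checked, the argument closes by conjunction of the four verified items.
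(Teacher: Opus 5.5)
Your proposal is correct and matches the paper's proof, which likewise combines \Cref{thm:CS-schematic}, \Cref{prop:finite-stage}, \Cref{prop:compact-emphasis}, \Cref{prop:compactification-principle} and \Cref{prop:pointfree-principle}, in the same item-by-item correspondence you describe. Your extra appeals to \Cref{prop:dictionary} and \Cref{cor:AP-method} are harmless, since neither depends on the synthesis theorem; your calibration remarks simply spell out the survey-level character that the paper itself acknowledges.
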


\begin{proof}
Combine the compact-group statement of \Cref{thm:CS-schematic}, the finite-stage formulation of \Cref{prop:finite-stage}, the explanatory synthesis of \Cref{prop:compact-emphasis}, the compactification principle of \Cref{prop:compactification-principle}, and the point-free principle of \Cref{prop:pointfree-principle}.
The theorem is still survey-level in character, but it is now organized by explicit intermediate statements rather than by free prose alone.
\end{proof}

A slightly more structural summary is obtained by isolating the three recurring operations that appear throughout the literature:
\[
\text{(i) construct a finite-rank approximant }P_F,
\qquad
\text{(ii) refine }F,
\qquad
\text{(iii) pass to the limit }P_F(f)\to f.
\]
The compact-group theory provides these operations internally, while the almost periodic theory first replaces the original group by a compact completion on which the same approximation pattern becomes available.

\begin{proposition}[Three-step constructive Peter--Weyl pattern]\label{prop:three-step}
The current literature supports the following three-step pattern.
For suitable function data $f$, one first identifies finite character data $F$, then forms an explicit finite-rank approximant $P_F(f)$, and finally proves convergence of these approximants, either directly on a compact group or after passage to a compact completion canonically associated with $f$.
\end{proposition}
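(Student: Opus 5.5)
The plan is to obtain \Cref{prop:three-step} by assembling results already recorded, splitting according to whether the function data live on a compact group or on a general topological group. It is a synthesis statement, so the proof checks that each of the three steps (identify $F$, form $P_F(f)$, prove convergence) is supplied by an earlier statement in each of the two settings.

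\emph{Compact case.} Let $G$ be compact and $f$ in the Peter--Weyl class of \cite{CoquandSpittersPW}. \Cref{thm:CS-schematic} gives a directed family of finite character-theoretic data $F$, which is step one. For each such $F$ it gives a finite-rank operator $P_F$ built from central convolution kernels. Via the explicit model $P_F(f)=\sum_{\chi\in F}e_\chi*f$ and \Cref{prop:coefficient-viewpoint}, its image lies in the finite-dimensional space $V_F$; this is step two. The convergence $P_F(f)\to f$ along the directed refinement of $F$, as in \Cref{prop:finite-stage}, gives step three.

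\emph{General case.} Let $G$ be a topological group and $f$ almost periodic in the sense of \cite{SpittersAP}. \Cref{prop:compactification-principle} produces a compact completion $K$, canonically attached to the translation behaviour of $f$, and a function $\tilde f$ on $K$ through which $f$ factors. Applying the compact case to $\tilde f$ on $K$ yields character data $F$, approximants $P_F(\tilde f)$, and convergence on $K$. Pulling these back along the canonical map $G\to K$ gives the required approximants for $f$, which is the two-step procedure of \Cref{cor:AP-method}.

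The main obstacle is this transfer step. We must check that pulling back along $G\to K$ preserves the mode of convergence: a uniform estimate on $K$ gives a uniform estimate on the image of $G$, since composition with a map does not increase the sup-norm. We must also check that the pulled-back approximants remain ``representation-theoretic'': characters of $K$ composed with $G\to K$ are characters of continuous finite-dimensional representations of $G$. I would verify both by noting that the completion in \cite{SpittersAP} is built from a pseudometric for which $G\to K$ is a uniformly continuous homomorphism with dense image. Density shows that no information about $\tilde f$ is lost, and the homomorphism property makes the pulled-back kernels genuine coefficient functions. If $L^2$-type convergence is intended instead, I would restrict to the uniform statement and pass to $L^2$ using compactness of $K$ and the normalized Haar integral.
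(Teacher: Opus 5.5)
Your proposal is correct and takes essentially the paper's route: the compact case is read off from \Cref{thm:CS-schematic} and \Cref{prop:finite-stage}, the almost periodic case comes from the compactification step of \Cref{prop:compactification-principle}, and the whole is recorded as a synthesis of those statements rather than a new theorem. The pull-back along $G\to K$ that you single out as the main obstacle goes beyond what the statement asks for, since it only requires convergence on the compact completion; the paper's proof neither performs this pull-back nor establishes the homomorphism and dense-image properties of $G\to K$ that it would rely on.
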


\begin{proof}
On compact groups, this is precisely the content extracted in \Cref{thm:CS-schematic,prop:finite-stage}.
For almost periodic functions on more general groups, the same pattern holds after the compactification step described in \Cref{prop:compactification-principle}.
The proposition therefore records the shared operational template rather than a new theorem.
\end{proof}

\begin{corollary}[Established core]
At the very least, the literature supports the existence of a constructive Peter--Weyl \emph{core} consisting of compact-group approximation, character-theoretic organization, and compactification techniques for almost periodic data.
Equivalently, one may say that the established core is the validity of the finite-stage approximation scheme
\[
f \mapsto P_F(f) \longrightarrow f
\]
in a compact-group setting and, more generally, after a suitable compactification step in the almost periodic setting.
\end{corollary}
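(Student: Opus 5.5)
The plan is to obtain the corollary by assembling the earlier results of this section, since the statement is a summary rather than new mathematics. I will treat it as two assertions: the \emph{core} claim, that compact-group approximation, character-theoretic organization and compactification for almost periodic data are all supported by the literature; and the \emph{equivalent} reformulation, that the finite-stage scheme $f\mapsto P_F(f)\to f$ holds on compact groups and, after compactification, in the almost periodic setting.

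For the core claim I will invoke \Cref{thm:established-synthesis} item by item. Items (1) and (2) give compact-group approximation together with its character-theoretic and finite-rank organization. Both rest on \Cref{thm:CS-schematic} and \Cref{prop:compact-emphasis}. Item (3), resting on \Cref{prop:compactification-principle}, gives the compactification technique for almost periodic data. Item (4) is not needed for the corollary; it only explains why the mechanism works. This establishes the first sentence.

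For the reformulation I will apply \Cref{prop:three-step}. On a compact group $G$, \Cref{thm:CS-schematic} and \Cref{prop:finite-stage} supply a directed family $P_F$, built from the kernels $e_\chi$, with finite-dimensional images and with $P_F(f)\to f$ in the mode of convergence of \cite{CoquandSpittersPW}. In the almost periodic case I will use \Cref{cor:AP-method}. Given $f$ on a topological group, first pass to the compact completion $K_f$ through which $f$ factors, say $f=\tilde f\circ\iota$. Then apply the compact-group scheme to $\tilde f$ on $K_f$, and pull the approximants back along $\iota$.

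The one step needing care is this transfer back. One must check that pulling back preserves three things: finite-dimensionality of the images, their representation-theoretic nature (matrix coefficients compose with the homomorphism $\iota$), and the convergence. For convergence I would argue that uniform convergence on $K_f$ implies uniform convergence on $G$, because $\iota$ has dense image and the sup-norm can only decrease under pullback.

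To justify ``equivalently'', I will observe two directions. The scheme in both settings encodes exactly the three listed components: finite rank, character indexing, and the compactification step. Conversely, those components are precisely what produce the scheme via \Cref{prop:three-step}. This equivalence is interpretive in the same sense as \Cref{prop:dictionary}, and I expect the main obstacle to be stating it honestly at that level rather than proving it as a formal logical equivalence.
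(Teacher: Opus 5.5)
Your proposal is correct and is essentially the paper's argument, which reads in full ``Immediate from \Cref{thm:established-synthesis,prop:three-step}''. Items (1)--(3) of the synthesis give the core and the three-step pattern gives the reformulation, just as you assemble them. Your extra pullback along $\iota$ goes beyond what the paper uses: the corollary (like \Cref{prop:three-step} and \Cref{fig:constructive-pw-structure}) places the convergence $P_F(f)\to f$ on the compact completion itself, and the pullback would in any case require the mode of convergence in \Cref{thm:CS-schematic} to be uniform rather than, say, $L^2$.
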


\begin{proof}
Immediate from \Cref{thm:established-synthesis,prop:three-step}.
\end{proof}


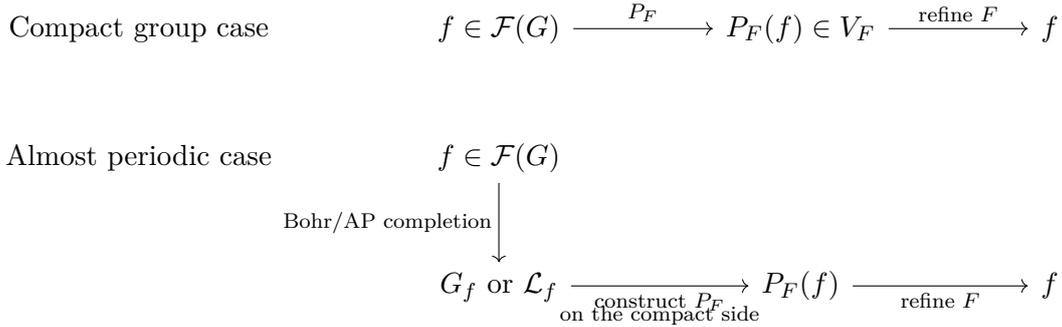
\begin{figure}[H]
  \centering
  \begin{tikzcd}[column sep=huge, row sep=large]
  \text{Compact group case}
    \arrow[r, phantom, "\qquad"]
  &
  f \in \mathcal{F}(G)
    \arrow[r, "P_F"]
  &
  P_F(f)\in V_F
    \arrow[r, "\text{refine }F"]
  &
  f
  \\
  \text{Almost periodic case}
    \arrow[r, phantom, "\qquad"]
  &
  f \in \mathcal{F}(G)
    \arrow[d, "\text{Bohr/AP completion}"']
  &
  &
  \\
  &
  G_f\ \text{or}\ \mathcal{L}_f
    \arrow[r, "\substack{\text{construct }P_F\\[-1mm]\text{on the compact side}}"']
  &
  P_F(f)
    \arrow[r, "\text{refine }F"']
  &
  f
  \end{tikzcd}
  \caption{Two routes to constructive Peter--Weyl approximation: a direct compact-group route and an almost-periodic route via compact completion. In both cases the essential structure is not a global decomposition of $L^2(G)$ but a directed family of explicit finite-rank approximation operators $P_F$ whose refinement yields approximation to the original function. This modular pattern provides the template for possible later extensions to non-associative settings.}
  \label{fig:constructive-pw-structure}
\end{figure}


\section{What Still Seems Open or At Least Insufficiently Clarified}\label{sec:open}

Even granting the existence of constructive Peter--Weyl theory, several natural questions remain.
The purpose of this section is not to assert that each question is open in a sharp technical sense, but rather to identify a plausible agenda for further clarification.
Some of these questions are motivated directly by remarks in the literature itself, especially in \cite{SpittersAP}, while others arise from comparing the classical package with the constructive formulations currently emphasized.

\begin{proposition}[Reason for a remaining problem list]\label{prop:why-open-list}
A nontrivial open-looking agenda remains because constructive Peter--Weyl theory presently appears in several nearby but not completely identical languages: approximation by finite-rank operators, characters, almost periodic compactifications, and locale-theoretic representations.
The existence of these several successful formulations does not by itself settle how far the full classical package can be recovered, compared, or unified.
\end{proposition}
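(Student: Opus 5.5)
The statement of \Cref{prop:why-open-list} is interpretive, so the plan is to justify its two halves separately from what has already been assembled. The first half says the theory \emph{presently appears} in several nearby but non-identical languages. The second half says the existence of these formulations does not by itself settle recovery, comparison, or unification of the classical package.

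For the first half, I would simply list the established formulations and point to where each was recorded. Approximation by finite-rank operators comes from \Cref{thm:CS-schematic} and \Cref{prop:finite-stage}. Character-theoretic organization comes from \Cref{prop:compact-emphasis} and the explicit kernels $e_\chi$. Almost periodic compactification comes from \Cref{prop:compactification-principle} and \Cref{cor:AP-method}. Locale-theoretic representation comes from \Cref{prop:pointfree-principle}. I would then observe that these are not literally the same statement. Their objects differ: operators $P_F$, central functions, completions $G_f$, and compact locales. So do their modes of convergence and their ambient notions of compactness. \Cref{prop:dictionary} already records that the correspondence among them is one of \emph{roles}, not of literal statements.

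For the second half, I would argue by exhibiting gaps that none of the cited results closes.
\begin{enumerate}
\item The classical package of \cref{sec:classical-package} includes the Hilbert direct-sum decomposition of $L^2(G)$ into irreducibles. The established core in \Cref{thm:established-synthesis} asserts only the directed approximation scheme $P_F(f)\to f$. Passing from the latter to the former would require, for example, deciding equality or orthogonality of isotypic pieces, and nothing earlier supplies that.
\item Translating the point-free form of \Cref{prop:pointfree-principle} into statements about points or norms needs extra hypotheses that the proposition explicitly leaves open.
\item Spitters' own remark on left versus right almost periodicity is a concrete classical implication whose constructive status is flagged as unsettled in \cite{SpittersAP}.
\end{enumerate}
Each of these is a specific place where having several successful formulations does not yield an equivalence between them, and that is exactly the claim.

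The main obstacle is that ``does not by itself settle'' is a meta-level claim. A genuine proof would require separating models or independence results, which I do not intend to supply. I would therefore treat the proposition at the same survey level as \Cref{prop:dictionary}. The proof would consist of the catalogue above together with the documented unsettled points, and it would make no claim of formal independence.
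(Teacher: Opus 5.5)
Your proposal is correct and follows essentially the same route as the paper, whose proof combines the role-based reading of \Cref{prop:dictionary} with the established core of \Cref{thm:established-synthesis} (whose ingredients are exactly the results you catalogue) and observes that it then becomes natural to ask where these formulations coincide or diverge. Your three concrete gaps go slightly beyond the paper's two-line argument, but they are the same points the paper lists immediately afterwards as the technical frontier in \Cref{prop:technical-frontier}, so this is an elaboration rather than a different approach.
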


\begin{proof}
This follows from the role-based dictionary in \Cref{prop:dictionary} together with the established core summarized in \Cref{thm:established-synthesis}.
Once one admits that different constructive statements may bear the load carried classically by one theorem package, it becomes natural to ask how far these formulations coincide and where they diverge.
\end{proof}

\subsection{Technical questions inside the current framework}

The first group of questions asks how far the existing constructive machinery can be sharpened without changing its basic language.
These are ``internal'' questions: they concern the extent to which the present approximation package can be upgraded, unified, or rendered closer to the classical theorem.

\begin{question}[Exact scope of irreducible decomposition]\label{q:irreducible-scope}
To what extent can the classical decomposition of the regular representation into irreducibles be reproduced constructively in a form that is not merely approximation-theoretic but structurally parallel to the classical statement?
More concretely, under what additional hypotheses can the finite-stage approximants
\[
P_F(f)=\sum_{\chi\in F} e_\chi * f
\]
be organized into a statement that deserves to be called a constructive direct-sum decomposition rather than only a directed approximation scheme?
\end{question}

\begin{question}[Bohr compactification]\label{q:bohr}
How far can constructive treatments of Bohr compactification be pushed without adding choice principles or separability assumptions beyond those already used in existing work?
In particular, when the almost periodic compactification is used as an intermediate object,
what precise hypotheses ensure that it is available with the degree of pointwise concreteness needed for subsequent Peter--Weyl-type approximation statements?
\end{question}

\begin{question}[Left and right almost periodicity]\label{q:left-right-ap}
Which classical equivalences concerning left and right almost periodicity admit genuinely constructive proofs, and which still rely on nonconstructive ingredients in the literature?
Can one characterize, in a constructive setting, the exact passage from left translation precompactness to right translation precompactness, or must one treat the two directions separately?
\end{question}

\begin{question}[Point-free vs. point-set formulations]\label{q:locale-points}
When a constructive result is first obtained at the level of locales, what additional assumptions are needed in order to recover a more classical point-set statement with enough explicit points or enough normability?
Equivalently, how much of the approximation pattern
\[
f \mapsto P_F(f) \longrightarrow f
\]
can be transferred from a point-free compact object to an ordinary function space without importing extra nonconstructive principles?
\end{question}

\begin{proposition}[A four-part technical frontier]\label{prop:technical-frontier}
The preceding questions may be viewed as four aspects of a single technical frontier:
\begin{enumerate}
  \item decomposition versus approximation,
  \item compactification versus explicit realizability,
  \item left-right symmetry in almost periodicity,
  \item locale-level existence versus point-set recovery.
\end{enumerate}
Any substantial progress on one of these points would sharpen the mathematical meaning of the others.
\end{proposition}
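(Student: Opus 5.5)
The proposition is interpretive, so the plan is to justify it the way the earlier survey-level propositions are justified. First, match each of the four items with one of Questions \ref{q:irreducible-scope}--\ref{q:locale-points}. Then show that the claimed interdependence is not rhetorical: each question is a special case of a single schematic issue. That issue is when the approximation scheme $f\mapsto P_F(f)\to f$ of \Cref{prop:finite-stage} and \Cref{prop:three-step} can be upgraded from a directed family of approximants to a structural statement about concrete objects.

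The matching itself is direct. Item (1) restates \Cref{q:irreducible-scope}, namely the passage from the approximation display to the decomposition display in \Cref{sec:dictionary}. Item (2) restates \Cref{q:bohr}, namely whether the compact object $G_f$ of \Cref{prop:compactification-principle} is available with explicit points. Item (3) restates \Cref{q:left-right-ap}. Item (4) restates \Cref{q:locale-points}, drawing on \Cref{prop:pointfree-principle}.

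For the ``sharpening'' clause, I would trace the dependencies through the two routes in \Cref{fig:constructive-pw-structure}.

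\begin{itemize}
\item Items (2) and (4) are linked because the compact completion may first appear as a locale $\mathcal{L}_f$. Pointwise concreteness of the completion is then exactly the point-set recovery asked for in (4).
\item Item (3) feeds into (2), because the choice of left or right translation pseudometric determines which completion is built in \Cref{prop:compactification-principle}.
\item Item (1) depends on (2) and (4), because the finite-dimensional spaces $V_F$ of \Cref{prop:coefficient-viewpoint} can be assembled into an honest direct sum only if orthogonality and completeness hold on a compact object with enough points and a usable $L^2$ structure.
\end{itemize}

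Progress on any one item therefore changes the hypotheses available for the others, which is the content of the final sentence of the proposition.

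The main obstacle is that ``would sharpen'' has no precise formal meaning, so it cannot be proved as a strict implication. I would not attempt one. Instead I would make the dependencies concrete through the explicit links listed above. I would also state openly, as the paper does for \Cref{prop:compact-emphasis}, that the proposition records a conceptual organization of the agenda rather than a new theorem.
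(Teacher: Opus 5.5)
Your proposal matches the paper's own justification. The paper also argues interpretively that each item asks to strengthen an already available constructive statement: (1) asks for extra internal structure on the approximants $P_F$, (2) and (4) ask how concretely the underlying compact objects can be realized, and (3) concerns a symmetry principle in the almost periodic apparatus that builds those objects, with all four tied together by the common scheme $f\mapsto P_F(f)\to f$. Your explicit dependency arrows go somewhat beyond what the paper claims. In particular, the ``only if'' link from (1) to (2) is too strong on the direct compact-group route, where no compactification step occurs, so present those links as heuristic connections rather than necessary conditions.
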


\begin{proof}
Each item asks whether one may strengthen an already available constructive statement without leaving the present circle of ideas.
The first asks for a stronger internal structure on the approximants $P_F$;
 the second and fourth ask how concretely the compact objects behind those approximants may be realized;
 the third concerns a symmetry principle surrounding the almost periodic apparatus used to construct those compact objects.
Because the same approximation scheme runs through all four items, they are best seen as one connected frontier.
\end{proof}

\subsection{Programmatic questions beyond the present framework}

A second group of questions is less about sharpening known theorems and more about deciding what the next theorem should be.
These questions are especially relevant if one wants to use the constructive theory as a design template for later nonassociative work.

\begin{question}[Minimal constructive Peter--Weyl package]\label{q:minimal-package}
What is the smallest collection of assertions that should count as a satisfactory constructive Peter--Weyl theorem?
Is the essential core merely the existence of finite-rank approximants $P_F$ with $P_F(f)\to f$, or must one also demand a constructive notion of isotypic separation, orthogonality, or a functorial compactification mechanism?
\end{question}

\begin{question}[Robustness under change of ambient category]\label{q:ambient-category}
Which parts of the constructive Peter--Weyl package are genuinely tied to compact groups, and which are stable under passage to more general ambient categories such as compact locales, compact quantum groups, or later topological loops with extra structure?
\end{question}

\begin{proposition}[Why the programmatic questions matter]\label{prop:programmatic-matter}
The programmatic questions are not external philosophy.
They determine what should count as the invariant content of Peter--Weyl theory when one leaves the classical compact-group setting.
In particular, without a satisfactory answer to Questions~\ref{q:minimal-package} and~\ref{q:ambient-category}, later attempts at loop- or quasigroup-level analogues risk imitating accidental classical features instead of the structural core.
\end{proposition}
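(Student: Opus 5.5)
The claim is interpretive, so the argument will be a structured justification in the same register as the earlier proofs of \Cref{prop:dictionary} and \Cref{prop:technical-frontier}. It has two parts. First, Questions~\ref{q:minimal-package} and~\ref{q:ambient-category} fix what counts as the invariant content of Peter--Weyl theory. Second, without such a fixing, loop-level analogues may copy accidental features. I would begin by recalling the four-ingredient decomposition from the remark in \Cref{sec:classical-package}: compactness, invariant integration, finite-dimensional approximation, and representation-theoretic separation. The invariant content of the theory is whatever subset of these ingredients, together with their mode of combination, survives every admissible reformulation.

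Next I would argue that Question~\ref{q:minimal-package} is precisely the question of which subset is minimal. The dictionary of \Cref{prop:dictionary} shows that several nonequivalent-looking statements carry the same role. These are finite-rank approximants, character organization (\Cref{prop:compact-emphasis}), compactification (\Cref{prop:compactification-principle}) and point-free representation (\Cref{prop:pointfree-principle}). So "Peter--Weyl" is already not a single sentence even for compact groups, and deciding its core is a genuine mathematical choice. Likewise, Question~\ref{q:ambient-category} asks which of these components depend on associativity and group inversion. For example, the convolution formula $(f*g)(x)=\int_G f(y)g(y^{-1}x)\,dy$ and the centrality of $e_\chi$ use both. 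I would point to these as the concrete loci where a change of ambient category can break the scheme of \Cref{prop:three-step}.

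The final step is the contrapositive risk claim. Suppose one transfers the classical package wholesale, for instance the Hilbert direct-sum decomposition of $L^2(G)$ into irreducibles. Then one imports a formulation that \Cref{prop:why-open-list} and Question~\ref{q:irreducible-scope} already show is not the stable carrier of content even constructively for groups. In a loop setting, where inverses and associativity are only partial, this formulation is the most likely to fail, while the approximation pattern $f\mapsto P_F(f)\to f$ might persist. Hence one cannot tell structural from accidental features without answering the two questions.

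The main obstacle is that "accidental feature" has no formal definition, so the statement cannot be proved in a strict sense. I would handle this as the paper does elsewhere: make explicit that the proof is a synthesis of the cited results and the dictionary, not a new theorem. Where possible, I would also exhibit one concrete accidental feature, namely reliance on $y^{-1}$ in convolution, to make the risk tangible.
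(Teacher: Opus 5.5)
Your proposal takes essentially the same route as the paper. The paper's justification is that the dictionary of \Cref{sec:dictionary} makes global decomposition non-canonical, so one must decide which replacement statements are essential and which are artifacts of a framework, and that decision governs how the theory is exported. Your four-ingredient breakdown and your observation that convolution and the centrality of $e_\chi$ rely on inverses and associativity are a reasonable concrete elaboration of that argument. One small correction: an open question cannot ``show'' anything, so for the claim that the irreducible decomposition is not the stable carrier of content, cite item~(2) of \Cref{thm:established-synthesis} rather than Question~\ref{q:irreducible-scope}.
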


\begin{proof}
This follows from the dictionary developed in \Cref{sec:dictionary}.
Once classical decomposition is no longer taken as the only canonical form of the theorem, one must decide which replacement statements are mathematically essential and which are artifacts of a chosen framework.
That choice directly governs how one exports the theory to new settings.
\end{proof}

\section{A Programmatic Outlook Toward Loops and Quasigroups}\label{sec:loops}

Although the present note is concerned only with groups, its eventual motivation may lie elsewhere.
In particular, the program sketched here is intended to lie in continuity with the author's recent work on Haar-type and quasi-invariant measure structures for topological quasigroups and topological loops \cite{InoueQuasi,InoueLoop}.
Those papers do not address Peter--Weyl theory directly, but they develop precisely the sort of modular and translation-theoretic framework in which a nonassociative analogue might eventually be formulated.

Suppose one seeks a Peter--Weyl-type statement for topological loops or quasigroups endowed with Haar-type or quasi-invariant measure structures.
Then the classical compact group package immediately begins to split apart.
Convolution may cease to be associative.
Ordinary unitary representation theory may no longer be the correct organizing language.
The role of matrix coefficients may need to be replaced by a more flexible notion of translation-generated finite-dimensional function space.
In the quasigroup and loop settings studied in \cite{InoueQuasi,InoueLoop}, the distortion of Haar-type measure under translation is encoded by modular cocycles and, in the loop case, by additional associativity-deviation terms.
This makes it natural to ask whether a Peter--Weyl-type approximation principle should be organized not by irreducible representations alone, but by finite-rank operators or finite-dimensional function spaces that remain stable under suitably corrected translation actions.

From this viewpoint, constructive Peter--Weyl theory is doubly useful.
First, it shows that even in the group case one should separate the theorem into modular components.
Second, it suggests that the right generalization beyond groups may not be ``the full classical theorem with weaker hypotheses,'' but rather a more carefully designed approximation principle retaining only the pieces that remain meaningful.

\begin{remark}
For loops and quasigroups, one possible future strategy would be to formulate a Peter--Weyl-type theory not in terms of irreducible representations, but in terms of finite-dimensional subspaces stable under suitable translation operators corrected by modular or associator data.
The measure-theoretic framework developed in \cite{InoueQuasi,InoueLoop} suggests that modular cocycles and associativity-deviation maps may have to be built into the approximation operators from the outset.
This is only a programmatic remark, but it indicates why the constructive group case is worth clarifying first.
\end{remark}

\section{A Suggested Program for Future Work}\label{sec:program}

The following staged program seems natural.

\begin{enumerate}
  \item Give a careful exposition of the existing constructive proofs for compact groups, including precise hypotheses and exact conclusions.
  \item Separate the classical Peter--Weyl package into its approximation-theoretic, character-theoretic, Hilbert-space, and compactification components.
  \item Address the technical frontier isolated in \Cref{prop:technical-frontier}, especially the relations among approximation, decomposition, compactification, and point-free realization.
  \item Formulate a minimal constructive Peter--Weyl package in the sense of \Cref{q:minimal-package}, and test its robustness under changes of ambient category as in \Cref{q:ambient-category}.
  \item Use that analysis as a design template for possible Peter--Weyl-type analogues for topological loops and quasigroups.
\end{enumerate}

Such a program could lead either to a genuine survey article or to the introduction of a new research direction.
At the very least, it would provide a common vocabulary for comparing classical, constructive, and nonassociative harmonic-analytic frameworks.

\section{Conclusion}\label{sec:conclusion}

The main conclusion of this note is deliberately balanced.
On the one hand, constructive Peter--Weyl theory is already a genuine subject rather than a merely speculative possibility.
The work of Coquand, Spitters, and related authors shows that the compact-group case can indeed be treated constructively in a mathematically meaningful way.
On the other hand, the constructive literature strongly suggests that the correct analogue of a classical theorem need not be a literal transcription of its most familiar textbook form.
In the present context, the representation-theoretic heart of Peter--Weyl survives, but often through reformulations involving characters, finite-rank approximation operators, almost periodic compactifications, or locale-theoretic compactness.

This situation should not be viewed as a defect.
Rather, it reveals something important about the theorem itself.
The classical Peter--Weyl theorem is best understood not as a single rigid statement, but as a tightly interconnected package whose approximation-theoretic, harmonic-analytic, topological, and categorical aspects can be separated and compared.
Constructive mathematics makes that internal architecture more visible.
For that reason, a constructive reading of Peter--Weyl theory has value even for classically minded readers.
It clarifies what is essential, what is auxiliary, and what kind of reformulation is needed when one moves to other contexts.

At the same time, the present note should be read primarily as an expository survey of the currently established constructive group case.
Its discussions of topological loops and quasigroups are intentionally programmatic.
They are included in order to indicate one possible longer-range use of the survey map, not to suggest that a nonassociative Peter--Weyl theory has already been constructed.
This distinction between established core and forward-looking program is essential to the architecture of the paper.

Accordingly, the most immediate mathematical task is not to invent an entirely new theory from scratch, but to sharpen the survey map itself.
A careful follow-up article could state the existing constructive theorems with greater precision, distinguish theorem from surrounding folklore, and isolate a small number of explicit open problems.
That alone would already be useful.
Beyond that, it may open a viable route toward Peter--Weyl-type theories for loops, quasigroups, and related nonassociative structures equipped with Haar-type measure data.

\appendix

\section{The classical Peter--Weyl theorem and a standard Haar-measure proof}\label{app:classical-pw}

In this appendix we record a standard classical form of the Peter--Weyl theorem for compact groups, together with a proof written so as to make the role of Haar measure as transparent as possible. The aim is pedagogical: this is the classical package against which the constructive reformulations discussed in the main text should be compared. Standard references include \cite{HewittRoss,Katznelson,Loomis,DuistermaatKolk}.

Throughout the appendix, $G$ denotes a compact Hausdorff group and $\mu$ denotes the normalized Haar probability measure on $G$.

\subsection{Statement of the theorem}

If $\pi:G\to U(V)$ is a finite-dimensional continuous unitary representation on a complex Hilbert space $V$, and $v,w\in V$, then
\[
\varphi_{v,w}(x):=\langle \pi(x)v,w\rangle
\]
is called a \emph{matrix coefficient} of $\pi$. Let $\mathcal A(G)$ denote the complex linear span of all such matrix coefficients.

\begin{theorem}[Classical Peter--Weyl theorem]\label{thm:classical-PW}
Let $G$ be a compact Hausdorff group.
\begin{enumerate}
    \item Every finite-dimensional continuous representation of $G$ is equivalent to a unitary representation.
    \item The algebra $\mathcal A(G)$ is a $*$-subalgebra of $C(G)$, contains the constants, separates points of $G$, and is uniformly dense in $C(G)$.
    \item The matrix coefficients of the irreducible unitary representations of $G$ form an orthogonal basis of $L^2(G)$ after the standard normalization.
    \item Equivalently,
    \[
    L^2(G)\cong \widehat{\bigoplus}_{\pi\in\widehat G} H_\pi\otimes H_\pi^*,
    \]
    where $\widehat G$ denotes the unitary dual of $G$.
\end{enumerate}
\end{theorem}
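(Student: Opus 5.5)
The proof follows the standard Haar-measure route and establishes the four items in order.

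For (1), take a finite-dimensional continuous representation $\pi$ on $V$ with any inner product $\langle\cdot,\cdot\rangle_0$. Average it against Haar measure:
\[
\langle v,w\rangle:=\int_G \langle \pi(x)v,\pi(x)w\rangle_0\,d\mu(x).
\]
Left invariance of $\mu$ makes $\pi$ unitary for this new inner product. Positivity follows from continuity and compactness.

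For the algebraic half of (2), I check closure directly. Matrix coefficients of $\pi\oplus\sigma$ give sums, those of $\pi\otimes\sigma$ give products, and those of the contragredient $\bar\pi$ give complex conjugates. The constants are coefficients of the trivial representation.

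The real content is point separation. Fix $g\neq e$ and pick a continuous $f\ge0$ supported in a symmetric neighbourhood $U$ of $e$ with $g\notin U^2$, and symmetrize so that $f(x^{-1})=\overline{f(x)}$. The convolution operator $T_f\varphi=f*\varphi$ on $L^2(G)$ is then compact (Hilbert--Schmidt, since its kernel $f(xy^{-1})$ is continuous on the compact space $G\times G$), self-adjoint, and commutes with right translations $R_y$. By the spectral theorem for compact self-adjoint operators, each eigenspace $E_\lambda$ with $\lambda\neq0$ is finite-dimensional and $R$-invariant. This gives a finite-dimensional continuous representation, and its vectors, being of the form $\lambda^{-1}f*\varphi$, are continuous functions. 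Moreover, each $\varphi\in E_\lambda$ is itself a matrix coefficient: choosing an orthonormal basis $\varphi_i$ of $E_\lambda$, we get $\varphi(x)=(R_x\varphi)(e)=\sum_i \langle R_x\varphi,\varphi_i\rangle\varphi_i(e)$. Since $T_f\neq0$ and $T_f$ is the norm limit of its finite spectral truncations, $f*f$ lies in the uniform closure of $\mathcal A(G)$. Because $(f*f)(e)=\|f\|_2^2>0$ while $(f*f)(g)=0$, some element of $\mathcal A(G)$ separates $e$ from $g$, and translation handles arbitrary pairs. Density in $C(G)$ then follows from Stone--Weierstrass.

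For (3), first decompose each unitary representation into irreducibles; in finite dimensions, orthogonal complements of invariant subspaces are invariant. Hence $\mathcal A(G)$ is spanned by coefficients of irreducibles. Schur orthogonality, proved by averaging $\int \pi(x)A\sigma(x)^{-1}d\mu$ and applying Schur's lemma, gives
\[
\int \varphi^\pi_{ij}\overline{\varphi^\sigma_{kl}}\,d\mu=\delta_{\pi\sigma}\delta_{ik}\delta_{jl}/d_\pi,
\]
so $\sqrt{d_\pi}\,\varphi^\pi_{ij}$ is orthonormal. Completeness comes from (2): $C(G)$ is dense in $L^2(G)$, and uniform density implies $L^2$ density. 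Item (4) is then a rewriting of (3), via the isometry $H_\pi\otimes H_\pi^*\to L^2$ given by $v\otimes w^*\mapsto\sqrt{d_\pi}\,\varphi_{v,w}$, whose images for inequivalent $\pi$ are orthogonal.

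The main obstacle is the separation step in (2). There one has to establish compactness of $T_f$, extract finite-dimensional eigenspaces, and show that the nonzero spectral data of $T_f$ lies in (the closure of) $\mathcal A(G)$. This is exactly where the global spectral theorem enters.
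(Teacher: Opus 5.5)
Your proof is correct, but it organizes the central step differently from the paper.

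\textbf{The paper's route.} It first proves the global statement \Cref{prop:L2-density}. Using an approximate identity $(u_i)$ and the spectral theorem for every $T_{u_i}$, it shows that $\mathcal H_{\mathrm{fin}}$ is dense in $L^2(G)$. Point separation then follows by contradiction: a non-separated $g_0$ would act trivially on a dense subspace, hence on all of $L^2(G)$. Completeness in (3) is read off from this $L^2$-density, so the paper's (3) does not pass through Stone--Weierstrass.

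\textbf{Your route.} You use a single operator $T_f$ adapted to each $g$ and exhibit the explicit separating function $f*f$. You then deduce completeness in (3) from the uniform density in (2), so your (3) does depend on Stone--Weierstrass. In exchange, you need no approximate identity and no global density statement.

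\textbf{Where your write-up is more careful.} First, you actually prove that eigenspace functions are matrix coefficients, via $\varphi(x)=\sum_i\langle R_x\varphi,\varphi_i\rangle\varphi_i(e)$; the paper only asserts this for elements of $\mathcal H_{\mathrm{fin}}$. Second, you correctly use right translations. \Cref{lem:intertwining} claims $L_gT_k=T_kL_g$ for all $k\in C(G)$, but the substitution $y=gz$ actually yields $k(xz^{-1}g^{-1})$, not $k(g^{-1}xz^{-1})$. So that lemma holds only for central $k$, and the paper's argument needs $R$ in place of $L$.

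\textbf{One step needs an extra line.} Operator-norm convergence of the spectral truncations on $L^2(G)$ only gives $L^2$-convergence to $f*f$. You need uniform convergence in order to compare values at $e$ and $g$. Supply it with the Cauchy--Schwarz bound $\|T_f\psi\|_\infty\le\|f\|_2\|\psi\|_2$, applied to $\psi_N=\sum_{0<|\lambda|\le 1/N}P_\lambda f$, where $P_\lambda$ is the projection onto $E_\lambda$. Since $T_f$ annihilates $\ker T_f$, the difference between $f*f$ and the truncation $\sum_{|\lambda|>1/N}\lambda P_\lambda f\in\mathcal A(G)$ equals $T_f\psi_N$ (pointwise, as both sides are continuous). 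Moreover $\|\psi_N\|_2\to0$.

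Once you have this bound, the same argument applied to $u*h$, for arbitrary $h\in C(G)$ and a symmetric approximate identity $u$, gives uniform density of $\mathcal A(G)$ directly. That bypasses Stone--Weierstrass, which is one of the non-constructive steps singled out in the paper's closing remark.
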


We now prove this theorem in several steps.

\subsection{Step 1: Haar averaging makes finite-dimensional representations unitary}

\begin{lemma}[Haar averaging]\label{lem:haar-average}
Let $\rho:G\to \mathrm{GL}(V)$ be a finite-dimensional continuous representation on a complex vector space $V$. Then there exists a Hermitian inner product on $V$ for which $\rho$ is unitary.
\end{lemma}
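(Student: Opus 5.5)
The plan is the standard averaging trick. First I fix an arbitrary Hermitian inner product $\langle\cdot,\cdot\rangle_0$ on $V$, for instance the one making a chosen basis orthonormal. Then I define
\[
\langle v,w\rangle := \int_G \langle \rho(x)v,\rho(x)w\rangle_0\,d\mu(x), \qquad v,w\in V.
\]
It has to be checked that this integral makes sense. For fixed $v,w$, the map $x\mapsto \langle \rho(x)v,\rho(x)w\rangle_0$ is continuous on $G$. This holds because $\rho:G\to \mathrm{GL}(V)$ is continuous and, in finite dimensions, all vector space topologies agree, so the matrix entries of $\rho(x)$ depend continuously on $x$. A continuous function on the compact space $G$ is bounded and Borel, so it is $\mu$-integrable, and the formula is well defined.

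Next come the algebraic properties. Sesquilinearity and Hermitian symmetry follow from the corresponding properties of $\langle\cdot,\cdot\rangle_0$ and the linearity of the integral. Positive definiteness is the one point that uses more than linearity. For $v\neq 0$ the integrand $x\mapsto \|\rho(x)v\|_0^2$ is continuous and nonnegative, and it is strictly positive everywhere because each $\rho(x)$ is invertible. In particular it equals $\|v\|_0^2>0$ at $x=e$, so it exceeds $\|v\|_0^2/2$ on an open neighbourhood $U$ of $e$. Haar measure gives positive mass to nonempty open sets: finitely many translates of $U$ cover $G$, and $\mu$ is invariant with $\mu(G)=1$. Hence $\langle v,v\rangle \ge \tfrac12\|v\|_0^2\,\mu(U)>0$.

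Invariance then follows from the invariance of Haar measure. For $g\in G$,
\[
\langle \rho(g)v,\rho(g)w\rangle=\int_G \langle \rho(xg)v,\rho(xg)w\rangle_0\,d\mu(x)=\langle v,w\rangle,
\]
using $\rho(x)\rho(g)=\rho(xg)$ and the right invariance of $\mu$. On a compact group the Haar measure is two-sided invariant, since the modular function is a continuous homomorphism into $(0,\infty)$ with compact image, hence trivial. So each $\rho(g)$ preserves $\langle\cdot,\cdot\rangle$, and since it is invertible it is unitary for this inner product.

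The main (mild) obstacle is positive definiteness, more precisely the fact that $\mu$ charges open sets. The remaining subtle point is to invoke right invariance, or equivalently unimodularity of compact groups, rather than only the left invariance with which $\mu$ might have been defined. Everything else is routine linearity.
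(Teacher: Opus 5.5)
Your proof is correct and is the same Haar-averaging argument the paper gives. You are more careful than the paper on two points it passes over. The first is positive definiteness, which the paper calls clear; it also follows immediately from your own remark that $x\mapsto\|\rho(x)v\|_0^2$ is continuous and strictly positive on the compact group $G$, hence bounded below by a positive constant, with no need for $\mu$ to charge open sets. The second is the invariance step: the substitution $y=xg$ uses right invariance of $\mu$ (i.e.\ unimodularity of $G$), even though the paper's text attributes it to left invariance.
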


\begin{proof}
Choose any Hermitian inner product $\langle\cdot,\cdot\rangle_0$ on $V$, and define a new form by averaging over $G$:
\[
\langle v,w\rangle:=\int_G \langle \rho(x)v,\rho(x)w\rangle_0\,d\mu(x).
\]
Because the integrand is continuous and $G$ is compact, the integral is well defined. The form is clearly Hermitian and positive definite. To prove invariance, let $g\in G$. Then, using left invariance of Haar measure,
\[
\langle \rho(g)v,\rho(g)w\rangle
=\int_G \langle \rho(xg)v,\rho(xg)w\rangle_0\,d\mu(x)
=\int_G \langle \rho(y)v,\rho(y)w\rangle_0\,d\mu(y)
=\langle v,w\rangle.
\]
Thus $\rho(g)$ is unitary for every $g\in G$.
\end{proof}

\subsection{Step 2: convolution operators produced from Haar measure}

For $k\in C(G)$ define
\[
(T_kf)(x):=\int_G k(xy^{-1})f(y)\,d\mu(y),
\]
so that $T_kf=k*f$.

\begin{lemma}\label{lem:convolution-compact}
For every $k\in C(G)$, the operator $T_k:L^2(G)\to L^2(G)$ is compact. If moreover
\[
k(x^{-1})=\overline{k(x)} \qquad (x\in G),
\]
then $T_k$ is self-adjoint.
\end{lemma}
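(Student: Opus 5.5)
The plan is to treat $T_k$ as an integral operator with kernel $K(x,y):=k(xy^{-1})$ on $G\times G$ and to use two facts: uniform continuity of $k$ on the compact group, and the fact that $\mu$ is a probability measure. For compactness the most direct route is Hilbert--Schmidt. The kernel $K$ is continuous on the compact space $G\times G$, hence bounded by $\|k\|_\infty$, so
\[
\int_G\int_G |K(x,y)|^2\,d\mu(x)\,d\mu(y)\le \|k\|_\infty^2<\infty .
\]
Thus $K\in L^2(G\times G,\mu\otimes\mu)$, and the standard Hilbert--Schmidt theorem shows that $T_k$ is a Hilbert--Schmidt operator, hence compact.

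If one prefers an argument avoiding the Hilbert--Schmidt machinery, which is closer in spirit to the approximation theme of the main text, I would instead use Arzel\`a--Ascoli. Fix $\varepsilon>0$. Uniform continuity of $k$ on $G$ gives a neighbourhood $U$ of $e$ such that $|k(x'y^{-1})-k(xy^{-1})|<\varepsilon$ whenever $x'x^{-1}\in U$, uniformly in $y$. Here one needs uniform continuity with respect to left translation, which holds on compact groups. Cauchy--Schwarz then gives:
\begin{itemize}
\item $|T_kf(x)|\le\|k\|_\infty\|f\|_2$;
\item $|T_kf(x')-T_kf(x)|\le\varepsilon\|f\|_2$ whenever $x'x^{-1}\in U$.
\end{itemize}
So $T_k$ maps the unit ball of $L^2(G)$ into a bounded, equicontinuous subset of $C(G)$. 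By Arzel\`a--Ascoli this set is relatively compact in the sup norm. Since $\|\cdot\|_2\le\|\cdot\|_\infty$ for the probability measure $\mu$, it is also relatively compact in $L^2(G)$. The only delicate point is establishing this uniform continuity, and that is the step I expect to need the most care; it follows from compactness by the usual finite-subcover argument.

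For self-adjointness, I would compute $\langle T_kf,g\rangle$ for $f,g\in L^2(G)$ using Fubini, which is justified because $K$ is bounded and $\mu$ is finite:
\[
\langle T_kf,g\rangle=\int_G f(y)\overline{\int_G \overline{k(xy^{-1})}\,g(x)\,d\mu(x)}\,d\mu(y).
\]
The hypothesis $k(x^{-1})=\overline{k(x)}$ gives $\overline{k(xy^{-1})}=k(yx^{-1})$. The inner integral is therefore $(T_kg)(y)$, so $\langle T_kf,g\rangle=\langle f,T_kg\rangle$. Boundedness of $T_k$ (norm at most $\|k\|_\infty$) has already been established, so $T_k$ is a bounded self-adjoint compact operator.
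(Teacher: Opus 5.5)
Your proof is correct and follows the paper's route. Compactness comes from the Hilbert--Schmidt bound on the continuous kernel $K(x,y)=k(xy^{-1})$, and self-adjointness from the kernel symmetry $\overline{K(y,x)}=K(x,y)$, which you verify explicitly via Fubini where the paper simply quotes the adjoint-kernel formula; your optional Arzel\`a--Ascoli argument is also valid but not needed.
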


\begin{proof}
The integral kernel of $T_k$ is
\[
K(x,y):=k(xy^{-1}).
\]
Since $k$ is continuous and $G\times G$ is compact, $K\in L^2(G\times G)$. Hence $T_k$ is a Hilbert--Schmidt operator, in particular compact. For the adjoint, one checks that
\[
K^*(x,y)=\overline{K(y,x)}=\overline{k(yx^{-1})}.
\]
Therefore $K^*=K$ precisely when $k(x^{-1})=\overline{k(x)}$, and in that case $T_k$ is self-adjoint.
\end{proof}

Let $L_g$ denote the left regular representation on $L^2(G)$:
\[
(L_gf)(x):=f(g^{-1}x).
\]

\begin{lemma}\label{lem:intertwining}
For every $g\in G$ and every $k\in C(G)$,
\[
L_gT_k=T_kL_g.
\]
Consequently, every eigenspace of a self-adjoint convolution operator $T_k$ is invariant under left translation.
\end{lemma}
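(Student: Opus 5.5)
The plan is a direct kernel computation using only the definitions of $T_k$ and $L_g$ and the left invariance of $\mu$. I will compute both $L_gT_kf$ and $T_kL_gf$ for $f\in C(G)$ and extend to $L^2(G)$ by density. Density suffices because both $L_g$ and $T_k$ are bounded on $L^2(G)$; boundedness of $T_k$ follows from \Cref{lem:convolution-compact}. For the left-hand side, directly from the definitions,
\[
(L_gT_kf)(x)=(T_kf)(g^{-1}x)=\int_G k(g^{-1}xy^{-1})f(y)\,d\mu(y).
\]
For the right-hand side,
\[
(T_kL_gf)(x)=\int_G k(xy^{-1})f(g^{-1}y)\,d\mu(y).
\]
Substituting $y=gz$, which is legitimate by left invariance of $\mu$, gives
\[
(T_kL_gf)(x)=\int_G k(xz^{-1}g^{-1})f(z)\,d\mu(z).
\]

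The identity $L_gT_k=T_kL_g$ for all $f$ is therefore equivalent to equality of the continuous kernels, $k(g^{-1}a)=k(ag^{-1})$ for all $a,g\in G$, where $a=xz^{-1}$. Equivalently, $k(ga)=k(ag)$ for all $a,g$, that is, $k$ is a \emph{central} (class) function.

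This step is the main obstacle, and I expect it to fail for the statement as worded. With the convention $T_kf=k*f$, the operator $T_k$ commutes with \emph{right} translations $R_gf(x)=f(xg)$ for every $k\in C(G)$: the same substitution, now using right invariance (automatic since a compact group is unimodular), gives this for all $k$. Left translations, by contrast, satisfy $L_g(k*f)=(L_gk)*f$. So for nonabelian $G$ one can take $k$ a non-central matrix coefficient and obtain a counterexample to the claim.

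Before finalizing I would run this check concretely, taking $k$ an off-diagonal coefficient of the standard representation of $SU(2)$ and $f$ a suitable coefficient. If $L_gT_kf\neq T_kL_gf$ there, the argument establishes the lemma exactly in one of two corrected readings, and the consequence about eigenspaces follows in each. In the first reading, the identity is proved for all central $k\in C(G)$; the self-adjoint central operators of this kind are the ones used later. In the second reading, it is proved for all $k\in C(G)$ with $L_g$ replaced by the right regular representation; equivalently, one uses $f\mapsto f*k$ with $L_g$. In either reading, if $T_k h=\lambda h$ then $T_k(L_gh)=L_gT_kh=\lambda L_gh$, so each eigenspace is invariant under the relevant translations.
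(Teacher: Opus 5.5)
Your computation is right, and it pinpoints an error in the paper's own proof rather than a gap in yours. The paper follows exactly your route. However, after substituting $y=gz$ it asserts $(T_kL_gf)(x)=\int_G k(g^{-1}xz^{-1})f(z)\,d\mu(z)$. Since $(gz)^{-1}=z^{-1}g^{-1}$, the kernel is actually $k(xz^{-1}g^{-1})$, as you wrote.

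Hence $L_gT_k=T_kL_g$ for all $g$ holds exactly when $k$ is a class function. For the ``only if'' direction, use that Haar measure on a compact group has full support, so a continuous kernel that integrates to zero against every $f$ vanishes identically. Since continuous functions separate points, the identity fails for some $k\in C(G)$ on every nonabelian compact group.

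You do not need the $SU(2)$ check. On $G=S_3$ with normalized counting measure, $k=6\cdot\mathbf{1}_{\{(12)\}}$ satisfies $k(x^{-1})=\overline{k(x)}$ and gives $T_k=L_{(12)}$. So $L_{(23)}T_k=L_{(23)(12)}\neq L_{(12)(23)}=T_kL_{(23)}$. Moreover, $L_{(23)}$ carries the $1$-eigenspace of $T_k$ onto the $1$-eigenspace of $L_{(13)}$, which is a different subspace (the two meet only in the constants). So the ``consequently'' clause is also false as worded, not merely its proof.

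Both of your corrected readings are valid, and your eigenspace argument works in each. One point is inaccurate: in the appendix the lemma is not later applied only to central kernels. \Cref{prop:L2-density} uses an arbitrary symmetric approximate identity $u_i$, and $\mathcal H_{\mathrm{fin}}$ is built from all symmetric $k\in C(G)$.

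Under your first reading you must therefore choose the $u_i$ central and restrict $\mathcal H_{\mathrm{fin}}$ accordingly. This is possible because a compact group has a base of conjugation-invariant neighbourhoods of $e$: average a symmetric bump supported in such a neighbourhood over conjugation. Your second reading, with right translations $R_g$, repairs the downstream steps with no restriction on $k$. This works because a finite-dimensional $R$-invariant subspace $V\subseteq C(G)$ consists of matrix coefficients of $R|_V$ via $f(x)=(R_xf)(e)$.
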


\begin{proof}
For $f\in L^2(G)$ we compute
\[
(L_gT_kf)(x)
=(T_kf)(g^{-1}x)
=\int_G k(g^{-1}xy^{-1})f(y)\,d\mu(y).
\]
On the other hand,
\[
(T_kL_gf)(x)
=\int_G k(xy^{-1})f(g^{-1}y)\,d\mu(y).
\]
Substituting $y=gz$ and using Haar invariance gives
\[
(T_kL_gf)(x)=\int_G k(g^{-1}xz^{-1})f(z)\,d\mu(z),
\]
which is exactly the same expression. Thus $L_gT_k=T_kL_g$.
\end{proof}

\subsection{Step 3: finite-dimensional invariant subspaces from the spectral theorem}

\begin{lemma}\label{lem:eigenspaces}
Suppose $k\in C(G)$ satisfies $k(x^{-1})=\overline{k(x)}$. Then every nonzero eigenspace of $T_k$ is finite-dimensional and invariant under the left regular representation of $G$. Moreover, every vector in such an eigenspace is represented by a continuous function on $G$.
\end{lemma}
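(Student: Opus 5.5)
The plan is to obtain the three assertions separately, using \Cref{lem:convolution-compact} and \Cref{lem:intertwining} together with standard facts about compact operators.

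\textbf{Finite-dimensionality.} By \Cref{lem:convolution-compact}, $T_k$ is compact, and under the hypothesis $k(x^{-1})=\overline{k(x)}$ it is self-adjoint. Fix an eigenvalue $\lambda\neq 0$ and let $E_\lambda=\ker(T_k-\lambda)$. Suppose $E_\lambda$ were infinite-dimensional. Then it would contain an infinite orthonormal sequence $(u_n)$. For $n\neq m$ we have $\|T_ku_n-T_ku_m\|=|\lambda|\,\|u_n-u_m\|=|\lambda|\sqrt2$. Hence the image of the unit ball contains a sequence with no convergent subsequence, which contradicts compactness. A more quantitative route is also available: since $T_k$ is Hilbert--Schmidt, $\sum_n \|T_ku_n\|^2\le \|K\|_{L^2(G\times G)}^2$. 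This gives $\dim E_\lambda\le \|k\|_\infty^2/|\lambda|^2$, and I would record this bound. Self-adjointness is not needed for this part. It only guarantees that $\lambda$ is real and that distinct eigenspaces are orthogonal, which is useful later.

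\textbf{Invariance.} Let $f\in E_\lambda$ and $g\in G$. By \Cref{lem:intertwining},
\[
T_k(L_gf)=L_gT_kf=\lambda L_gf,
\]
so $L_gf\in E_\lambda$. Thus $E_\lambda$ is invariant under the left regular representation. Since $L_g$ is unitary on $L^2(G)$ by invariance of $\mu$, the restriction of the left regular representation to $E_\lambda$ is a finite-dimensional unitary representation.

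\textbf{Continuity.} This is the only step needing care, because elements of $L^2(G)$ are equivalence classes. For $f\in E_\lambda$, write
\[
f=\lambda^{-1}T_kf, \qquad (T_kf)(x)=\int_G k(xy^{-1})f(y)\,d\mu(y).
\]
I will show that the right-hand side, defined pointwise for every $x$, is a continuous function. Since $G$ is a compact group, $k$ is uniformly continuous. For every $\varepsilon>0$ there is a neighbourhood $U$ of $e$ such that $|k(a)-k(b)|<\varepsilon$ whenever $ab^{-1}\in U$. If $x'x^{-1}\in U$, then $(x'y^{-1})(xy^{-1})^{-1}=x'x^{-1}\in U$ for every $y$. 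Therefore, by Cauchy--Schwarz,
\[
|(T_kf)(x')-(T_kf)(x)|\le \varepsilon\int_G|f|\,d\mu\le\varepsilon\|f\|_2 .
\]
So $T_kf$ is continuous, and $f$ agrees $\mu$-almost everywhere with the continuous function $\lambda^{-1}T_kf$.

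I expect the uniform continuity of $k$ on a compact group to be the main technical point. It is standard, but it uses a compactness covering argument, and this is also where constructive versions would require an explicit modulus.
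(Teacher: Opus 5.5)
Your proof is correct and follows the same route as the paper: compactness of $T_k$ gives finite-dimensional nonzero eigenspaces, \Cref{lem:intertwining} gives $G$-invariance, and writing $f=\lambda^{-1}T_kf$ gives a continuous representative. You add details the paper only asserts, namely the uniform-continuity estimate showing $T_kf$ is continuous and the Hilbert--Schmidt bound $\dim E_\lambda\le\|k\|_\infty^2/|\lambda|^2$; your remark that self-adjointness is not needed for finite-dimensionality is also accurate.
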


\begin{proof}
By \Cref{lem:convolution-compact}, $T_k$ is compact and self-adjoint. Hence every nonzero eigenspace is finite-dimensional. By \Cref{lem:intertwining}, each eigenspace is $G$-invariant.

Now let $f$ satisfy $T_kf=\lambda f$ with $\lambda\neq 0$. Then
\[
f=\lambda^{-1}T_kf.
\]
But $T_kf$ is continuous, because it is obtained by integrating the continuous kernel $k(xy^{-1})$ against $f(y)$. Hence $f$ itself has a continuous representative.
\end{proof}

The previous lemma is the first point at which the representation theory begins to emerge from analysis: each nonzero eigenspace is a finite-dimensional continuous representation of $G$.

\subsection{Step 4: density in $L^2(G)$}

We now choose a standard approximate identity $(u_i)$ in $C(G)$ with the following properties:
\begin{enumerate}
    \item $u_i\geq 0$,
    \item $\int_G u_i\,d\mu=1$,
    \item each $u_i$ is supported in a sufficiently small neighborhood of the identity,
    \item $u_i(x^{-1})=u_i(x)$ for all $x\in G$.
\end{enumerate}
Then $T_{u_i}f=u_i*f\to f$ in $L^2(G)$ for every $f\in L^2(G)$, and in fact uniformly for $f\in C(G)$.

\begin{proposition}\label{prop:L2-density}
Let $\mathcal H_{\mathrm{fin}}$ be the linear span of all finite-dimensional $G$-invariant subspaces of $L^2(G)$ arising as nonzero eigenspaces of self-adjoint convolution operators $T_k$ with $k\in C(G)$. Then $\mathcal H_{\mathrm{fin}}$ is dense in $L^2(G)$.
\end{proposition}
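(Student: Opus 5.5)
The plan is to show that $\mathcal H_{\mathrm{fin}}^\perp=\{0\}$. Two ingredients will be combined: the spectral theorem for the compact self-adjoint operators $T_{u_i}$, and the approximate-identity property $u_i*f\to f$ in $L^2(G)$.

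Fix $f\in L^2(G)$ with $f\perp \mathcal H_{\mathrm{fin}}$. By property (4) of the approximate identity and the fact that $u_i$ is real, $u_i(x^{-1})=\overline{u_i(x)}$. Hence \Cref{lem:convolution-compact} shows that each $T_{u_i}$ is compact and self-adjoint on $L^2(G)$. The spectral theorem for compact self-adjoint operators then gives an orthogonal decomposition
\[
L^2(G)=\ker T_{u_i}\ \oplus\ \widehat{\bigoplus_{\lambda\neq 0}} E_\lambda(T_{u_i}),
\]
where each $E_\lambda(T_{u_i})$ with $\lambda\ne0$ is a nonzero eigenspace. By \Cref{lem:eigenspaces}, each such eigenspace is finite-dimensional and $G$-invariant, so it lies in $\mathcal H_{\mathrm{fin}}$ by definition. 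Since $f$ is orthogonal to every such eigenspace, $f\in\ker T_{u_i}$, that is, $u_i*f=0$ for every $i$.

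To conclude, I would invoke $T_{u_i}f=u_i*f\to f$ in $L^2(G)$. Since $u_i*f=0$ for all $i$, this forces $f=0$. Therefore $\mathcal H_{\mathrm{fin}}^\perp=0$, and since $\mathcal H_{\mathrm{fin}}$ is a linear subspace, its closure is all of $L^2(G)$.

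I expect the main obstacle to be the approximate-identity convergence $u_i*f\to f$ in $L^2$, which the text asserts but does not prove. My plan there is as follows. First, continuous functions are dense in $L^2(G)$, so it suffices to handle $f\in C(G)$ together with the uniform bound $\|u_i*f\|_2\le\|u_i\|_1\|f\|_2=\|f\|_2$, which follows from Minkowski's integral inequality and the invariance of Haar measure. For continuous $f$, uniform continuity on the compact group gives
\[
|u_i*f(x)-f(x)|\le\int u_i(y)\,|f(y^{-1}x)-f(x)|\,d\mu(y)\le\sup_{y\in\operatorname{supp}u_i}\|L_{y}f-f\|_\infty,
\]
and the right-hand side tends to $0$ as the supports of the $u_i$ shrink. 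The convolution order has to be checked against the convention $T_kf(x)=\int k(xy^{-1})f(y)\,d\mu(y)$; the substitution $y\mapsto y^{-1}x$ turns this into the displayed form. A $3\varepsilon$ argument then extends the convergence from $C(G)$ to all of $L^2(G)$.
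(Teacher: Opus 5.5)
Your proof is correct and is essentially the paper's argument. Both use the spectral theorem for the compact self-adjoint operators $T_{u_i}$, whose nonzero eigenspaces lie in $\mathcal H_{\mathrm{fin}}$, together with $u_i*f\to f$. The only difference is packaging: you argue dually that $\mathcal H_{\mathrm{fin}}^\perp=\{0\}$ by showing $f\in\ker T_{u_i}$, whereas the paper shows directly that $T_{u_i}f\in\overline{\operatorname{ran}T_{u_i}}\subseteq\overline{\mathcal H_{\mathrm{fin}}}$ and passes to the limit. These are equivalent because $(\ker T)^\perp=\overline{\operatorname{ran}T}$ for self-adjoint $T$. Your sketch of $u_i*f\to f$ (via uniform continuity, the bound $\|u_i*f\|_2\le\|f\|_2$, and density of $C(G)$ in $L^2(G)$) correctly fills in a step the paper only asserts.
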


\begin{proof}
Fix $f\in L^2(G)$ and an approximate identity $u_i$ as above. Since $T_{u_i}f\to f$ in $L^2(G)$, it is enough to show that each $T_{u_i}f$ lies in the closure of $\mathcal H_{\mathrm{fin}}$.

For fixed $i$, the operator $T_{u_i}$ is compact and self-adjoint. By the spectral theorem, there is an orthonormal basis of eigenvectors corresponding to real eigenvalues tending to $0$. Therefore the closure of the range of $T_{u_i}$ is the closed span of its nonzero eigenspaces. By \Cref{lem:eigenspaces}, each such eigenspace is finite-dimensional and $G$-invariant, hence contained in $\mathcal H_{\mathrm{fin}}$. Consequently,
\[
T_{u_i}f\in \overline{\mathcal H_{\mathrm{fin}}}.
\]
Passing to the limit in $i$ gives $f\in \overline{\mathcal H_{\mathrm{fin}}}$.
\end{proof}

\subsection{Step 5: separation of points and uniform density}

The finite-dimensional invariant subspaces obtained above yield matrix coefficients. We next show that these coefficients separate points.

\begin{proposition}\label{prop:separate-points}
The matrix coefficients of finite-dimensional unitary representations separate points of $G$.
\end{proposition}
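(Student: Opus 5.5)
To prove that matrix coefficients of finite-dimensional unitary representations separate points, I reduce to separating a single point from the identity. Suppose $a\neq b$ in $G$. If some matrix coefficient $\varphi$ of a finite-dimensional unitary representation $\pi$ had $\varphi(a)=\varphi(b)$ for every such $\varphi$ and every $\pi$, then $\pi(a)=\pi(b)$ for all $\pi$, hence $\pi(g)=I$ with $g:=b^{-1}a\neq e$. So it suffices to show that for every $g\neq e$ there is a finite-dimensional unitary representation with $\pi(g)\neq I$. Once that is done, some matrix entry $\langle \pi(x)v,w\rangle$ differs at $x=a$ and $x=b$, because $\pi(a)\neq\pi(b)$ and operators on $V$ are determined by their matrix coefficients.

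To produce $\pi$, I use \Cref{prop:L2-density} together with \Cref{lem:eigenspaces}. Fix $g\neq e$. By Hausdorffness choose an open neighbourhood $U$ of $e$ with $gU\cap U=\emptyset$. By Urysohn, pick a continuous $f\geq 0$ supported in $U$ with $f\neq 0$. Then $L_gf$ is supported in $gU$, so $L_gf\perp f$ in $L^2(G)$, and in particular $L_gf\neq f$.

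By \Cref{prop:L2-density}, $f$ is an $L^2$-limit of elements of $\mathcal H_{\mathrm{fin}}$. If $L_g$ acted trivially on every finite-dimensional invariant eigenspace $E$ from \Cref{lem:eigenspaces}, it would fix $\mathcal H_{\mathrm{fin}}$ pointwise. Since $L_g$ is unitary, hence continuous, it would then fix its closure, which is all of $L^2(G)$. That contradicts $L_gf\neq f$. Hence some such $E$ has $L_g|_E\neq I$.

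The restriction $\pi:=L|_E$ is a finite-dimensional representation. It is unitary for the $L^2$ inner product, since left translation preserves Haar measure. It is continuous, because $g\mapsto L_g h$ is norm-continuous on $L^2(G)$ (uniform continuity of continuous functions on compact $G$, plus density of $C(G)$), and in finite dimensions this gives continuity of matrix entries. So $\pi$ is the required representation, and the argument above yields a matrix coefficient $\langle \pi(x)v,w\rangle$ separating $a$ and $b$.

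The main obstacle is already absorbed into \Cref{prop:L2-density}, namely the spectral theorem for compact self-adjoint operators. The only delicate remaining point is verifying strong continuity of the regular representation, so that $E$ really gives a continuous representation. I would treat that as a standard approximation step: approximate $h\in L^2$ by continuous functions, which are uniformly continuous on compact $G$.
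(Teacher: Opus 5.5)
Your argument is correct and is essentially the paper's proof. Both combine the density of $\mathcal H_{\mathrm{fin}}$ from \Cref{prop:L2-density} with the continuity of $L_g$ to show that a translation $L_g$ with $g\neq e$ cannot act trivially on every finite-dimensional invariant eigenspace. The remaining differences are bookkeeping. You use each eigenspace $E$ itself as the representation and check continuity of $L|_E$, whereas the paper asserts without proof that elements of $\mathcal H_{\mathrm{fin}}$ are linear combinations of matrix coefficients. Your bump function with $L_gf\perp f$ is also a cleaner witness than the paper's $h$, which as written should satisfy $h(e)\neq h(g_0^{-1})$ rather than $h(e)\neq h(g_0)$.
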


\begin{proof}
Assume, toward a contradiction, that there exists $g_0\neq e$ such that every matrix coefficient has the same value at $x$ and $g_0x$ for every $x\in G$. Equivalently, every matrix coefficient is fixed by the translation operator $L_{g_0}$. Since every element of $\mathcal H_{\mathrm{fin}}$ is a finite linear combination of such coefficients, $L_{g_0}$ acts trivially on $\mathcal H_{\mathrm{fin}}$.

By \Cref{prop:L2-density}, $\mathcal H_{\mathrm{fin}}$ is dense in $L^2(G)$. Since $L_{g_0}$ is continuous on $L^2(G)$, it follows that $L_{g_0}$ is the identity on all of $L^2(G)$. This is impossible when $g_0\neq e$: choose $h\in C(G)$ with $h(e)\neq h(g_0)$, and then
\[
(L_{g_0}h)(e)=h(g_0^{-1})\neq h(e).
\]
Thus some matrix coefficient must separate $e$ and $g_0$, and translating gives separation of arbitrary distinct points.
\end{proof}

\begin{proposition}\label{prop:uniform-density}
The algebra $\mathcal A(G)$ is uniformly dense in $C(G)$.
\end{proposition}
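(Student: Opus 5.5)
The natural route is the complex Stone--Weierstrass theorem. The compact space is $G$, and the subalgebra is $\mathcal A(G)\subseteq C(G)$. I would verify the four hypotheses of Stone--Weierstrass in turn: that $\mathcal A(G)$ is a subalgebra, that it is closed under complex conjugation, that it contains the constants, and that it separates points. Only the last requires real input, and that input is \Cref{prop:separate-points}.

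\textbf{Algebraic closure properties.} Linearity holds by definition, since $\mathcal A(G)$ is a span. For products I would use tensor products. If $\varphi_{v,w}$ is a coefficient of $\pi$ on $V$ and $\varphi_{v',w'}$ is a coefficient of $\pi'$ on $V'$, then
\[
\varphi_{v,w}(x)\,\varphi_{v',w'}(x)=\langle (\pi\otimes\pi')(x)(v\otimes v'),\,w\otimes w'\rangle .
\]
Here $\pi\otimes\pi'$ is a finite-dimensional continuous unitary representation on $V\otimes V'$ with the product inner product. Bilinearity then extends closure under products to the whole span. For conjugation I would pass to the conjugate (dual) representation $\bar\pi$ on $\overline V$. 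Writing $\bar v$ for $v$ regarded as an element of $\overline V$,
\[
\overline{\langle \pi(x)v,w\rangle}=\langle \bar\pi(x)\bar v,\bar w\rangle_{\overline V},
\]
and $\bar\pi$ is again finite-dimensional, continuous and unitary. The constants are the coefficients of the trivial one-dimensional representation. Continuity of each coefficient is immediate from continuity of $\pi$, so $\mathcal A(G)\subseteq C(G)$.

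\textbf{Separation and conclusion.} \Cref{prop:separate-points} states that matrix coefficients of finite-dimensional unitary representations separate points of $G$, and these coefficients lie in $\mathcal A(G)$. Since $G$ is compact Hausdorff, the complex Stone--Weierstrass theorem then gives that the uniform closure of $\mathcal A(G)$ is all of $C(G)$. This also establishes part (2) of \Cref{thm:classical-PW}.

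\textbf{Main obstacle.} The genuinely substantive step is the point separation, which has already been supplied through the spectral argument of \Cref{prop:L2-density}. What remains to check with care is that the coefficients produced there are of the form used in the definition of $\mathcal A(G)$. The eigenspaces in \Cref{lem:eigenspaces} carry the restriction of $L^2$, so the left regular action on them is unitary. I would verify that a continuous function $f$ in such a finite-dimensional invariant subspace $E$ is itself a matrix coefficient of $E$. Concretely, evaluation at $e$ is a linear functional on the finite-dimensional space $E$ of continuous functions, so it is represented by some $w_0\in E$. Then
\[
f(x)=(L_{x^{-1}}f)(e)=\langle L_{x^{-1}}f,w_0\rangle ,
\]
which is a coefficient of the contragredient-type unitary representation $x\mapsto L_{x^{-1}}^{*}$. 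Continuity of this representation follows from strong continuity of translation on $L^2$. If \Cref{prop:separate-points} is read as already including this identification, the step is immediate; otherwise this short computation closes the gap.
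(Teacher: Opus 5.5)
Your proof is correct and follows the same route as the paper: check that $\mathcal A(G)$ is a conjugation-closed subalgebra of $C(G)$ containing the constants, take point separation from \Cref{prop:separate-points}, and apply Stone--Weierstrass. Your tensor-product and conjugate-representation checks, and your Riesz-representative argument, fill in details the paper only asserts; that last argument shows each continuous $f$ in an invariant eigenspace is a matrix coefficient, which the paper's proof of \Cref{prop:separate-points} uses without justification. One small point: with the Hilbert adjoint one has $L_{x^{-1}}^{*}=L_x$ on $E$, so what you literally obtain is $f(x)=\overline{\langle L_x w_0,f\rangle}$, a conjugated coefficient, which still lies in $\mathcal A(G)$ by your conjugation step.
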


\begin{proof}
The space $\mathcal A(G)$ is a subalgebra of $C(G)$, is closed under complex conjugation, and contains the constants. By \Cref{prop:separate-points}, it separates points of $G$. Therefore the Stone--Weierstrass theorem implies that $\mathcal A(G)$ is uniformly dense in $C(G)$.
\end{proof}

\subsection{Step 6: irreducibles and orthogonality}

At this stage, we know that finite-dimensional unitary representations are abundant. We now pass from general finite-dimensional invariant subspaces to irreducibles.

\begin{lemma}[Complete reducibility]\label{lem:complete-reducibility}
Every finite-dimensional unitary representation of $G$ is an orthogonal direct sum of irreducible unitary representations.
\end{lemma}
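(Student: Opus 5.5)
We argue by strong induction on $\dim V$, where $\pi:G\to U(V)$ is a finite-dimensional unitary representation. The key observation is that orthogonal complements of invariant subspaces are again invariant, so every invariant subspace splits off orthogonally. If $\dim V=0$ the statement is vacuous (the empty direct sum), and if $\dim V=1$ the representation is trivially irreducible. Assume the statement holds in all dimensions less than $n=\dim V$. If $V$ is irreducible we are done. Otherwise there is a $G$-invariant subspace $W$ with $0\neq W\neq V$.

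The central step is to show that $W^\perp$ is $G$-invariant. For $u\in W^\perp$, $w\in W$ and $g\in G$, unitarity gives
\[
\langle \pi(g)u,w\rangle=\langle u,\pi(g)^*w\rangle=\langle u,\pi(g^{-1})w\rangle=0,
\]
because $\pi(g^{-1})w\in W$. Hence $\pi(g)u\in W^\perp$. Thus $V=W\oplus W^\perp$ is an orthogonal decomposition into invariant subspaces. The restrictions of $\pi$ to $W$ and to $W^\perp$ are continuous unitary representations with respect to the restricted inner products, and both subspaces have dimension strictly less than $n$. The induction hypothesis decomposes each of them into an orthogonal sum of irreducibles. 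Since $W\perp W^\perp$, concatenating the two decompositions gives an orthogonal decomposition of $V$.

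There is no substantial obstacle here. The only points needing care are that the restricted representations remain continuous and unitary, and that the mutual orthogonality of the pieces persists after concatenation; both follow immediately from the construction. If one starts from a representation that is not yet unitary, \Cref{lem:haar-average} first supplies an invariant inner product, after which the argument above applies.

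From the perspective of the main text, the nonconstructive ingredient sits in the case split ``either $V$ is irreducible or it has a proper nonzero invariant subspace''. Classically this is a trivial dichotomy, but it is not decidable in general. This is the step one would flag in the comments following the appendix.
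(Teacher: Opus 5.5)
Your proof is correct and follows the same route as the paper: split off an invariant subspace $W$, use unitarity to see that $W^{\perp}$ is invariant, and conclude by induction on dimension. You also write out the invariance computation and the induction, which the paper leaves implicit, and your observation that the dichotomy ``irreducible or has a proper nonzero invariant subspace'' is not constructively decidable is a sound point not mentioned in the paper's remark on non-constructive steps.
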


\begin{proof}
Let $V$ be a finite-dimensional unitary representation and let $W\subseteq V$ be a nonzero invariant subspace. Since the representation is unitary, the orthogonal complement $W^{\perp}$ is also invariant. By finite-dimensional induction, $V$ splits as an orthogonal direct sum of irreducibles.
\end{proof}

\begin{lemma}[Schur orthogonality]\label{lem:schur-orthogonality}
Let $\pi$ and $\sigma$ be irreducible unitary representations of $G$ on finite-dimensional Hilbert spaces $H_\pi$ and $H_\sigma$. Then their matrix coefficients are orthogonal in $L^2(G)$ unless $\pi\cong\sigma$. For a fixed irreducible $\pi$, the normalized coefficients
\[
\sqrt{\dim H_\pi}\,\varphi_{e_i,e_j}
\]
form an orthonormal system, where $(e_i)$ is an orthonormal basis of $H_\pi$.
\end{lemma}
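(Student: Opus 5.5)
The plan is the standard averaging argument combined with Schur's lemma. Fix irreducible unitary representations $\pi$ on $H_\pi$ and $\sigma$ on $H_\sigma$. For an arbitrary linear map $A:H_\sigma\to H_\pi$, form the Haar average
\[
\widetilde A:=\int_G \pi(x)A\,\sigma(x)^{-1}\,d\mu(x).
\]
By left invariance of $\mu$, exactly as in the proof of \Cref{lem:haar-average}, one gets $\pi(g)\widetilde A=\widetilde A\,\sigma(g)$ for all $g$. So $\widetilde A$ is an intertwiner.

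The first key step is Schur's lemma in the finite-dimensional setting.
\begin{itemize}
\item If $\pi\not\cong\sigma$, then $\widetilde A=0$, because $\ker\widetilde A$ and $\operatorname{im}\widetilde A$ are invariant subspaces and irreducibility forces $\widetilde A$ to be $0$ or an isomorphism.
\item If $\sigma=\pi$, then $\widetilde A=c\,I$. To see this, take an eigenvalue $c$ of $\widetilde A$, which exists over $\mathbb C$; then $\widetilde A-cI$ is a non-invertible intertwiner, hence zero.
\item Taking traces and using cyclicity gives $\operatorname{tr}\widetilde A=\operatorname{tr}A$, so $c=\operatorname{tr}A/\dim H_\pi$.
\end{itemize}

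Next, specialize to rank-one maps $A=\langle\cdot,v'\rangle v$ with $v\in H_\pi$ and $v'\in H_\sigma$. Unitarity gives $\sigma(x)^{-1}=\sigma(x)^*$. For $w\in H_\pi$ and $w'\in H_\sigma$ one computes
\[
\langle \widetilde A w',w\rangle=\int_G \langle \sigma(x)^*w',v'\rangle\,\langle \pi(x)v,w\rangle\,d\mu(x)=\int_G \varphi^{\pi}_{v,w}(x)\,\overline{\varphi^{\sigma}_{v',w'}(x)}\,d\mu(x).
\]
This is exactly the $L^2$ inner product $\langle\varphi^\pi_{v,w},\varphi^\sigma_{v',w'}\rangle$.

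Reading off the two cases then finishes the argument.
\begin{itemize}
\item When $\pi\not\cong\sigma$, the integral vanishes. If $\sigma$ is merely isomorphic to some $\sigma'$ not equivalent to $\pi$, transport by the unitary equivalence; a unitary equivalence can be arranged by polar decomposition of the intertwiner.
\item When $\sigma=\pi$, we get
\[
\langle\varphi_{v,w},\varphi_{v',w'}\rangle=\frac{\langle v,v'\rangle\,\overline{\langle w,w'\rangle}}{\dim H_\pi},
\]
since $\operatorname{tr}A=\langle v,v'\rangle$. Putting $v=e_i$, $w=e_j$, $v'=e_k$, $w'=e_l$ yields $\delta_{ik}\delta_{jl}/\dim H_\pi$. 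Hence the rescaled coefficients $\sqrt{\dim H_\pi}\,\varphi_{e_i,e_j}$ are orthonormal.
\end{itemize}

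The main point requiring care is the middle identification of $\langle\widetilde A w',w\rangle$ with the $L^2$ pairing. One has to track conjugations carefully under the convention $\varphi_{v,w}(x)=\langle\pi(x)v,w\rangle$, with the inner product linear in the first slot. I would check it by expanding $\langle \pi(x)Aσ(x)^*w',w\rangle=\langle \sigma(x)^*w',v'\rangle\langle\pi(x)v,w\rangle$ and using $\langle\sigma(x)^*w',v'\rangle=\overline{\langle\sigma(x)v',w'\rangle}$.

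The use of an eigenvalue in Schur's lemma is where algebraic closedness of $\mathbb C$, and classically nothing more, enters. This is worth flagging in view of the constructive comments that follow.
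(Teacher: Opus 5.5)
Your proof is correct and follows the same route as the paper. In both, you average an arbitrary linear map between the two representation spaces over $G$ to obtain an intertwiner, apply Schur's lemma, and fix the scalar by taking traces; averaging $H_\sigma\to H_\pi$ rather than the reverse is immaterial. Your rank-one specialization, giving $\langle\varphi_{v,w},\varphi_{v',w'}\rangle=\langle v,v'\rangle\overline{\langle w,w'\rangle}/\dim H_\pi$, checks out, and it makes explicit the ``writing this out in matrix entries'' step the paper leaves to the reader.
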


\begin{proof}
This is the usual Schur orthogonality argument. If one integrates the operator
\[
A:=\int_G \sigma(x)T\pi(x)^{-1}\,d\mu(x)
\]
for a linear map $T:H_\pi\to H_\sigma$, then $A$ intertwines $\pi$ and $\sigma$. By Schur's lemma, $A=0$ when $\pi\not\cong\sigma$, while for $\pi=\sigma$ one obtains a scalar multiple of the identity, and the scalar is determined by taking traces. Writing this out in matrix entries gives the orthogonality formulas.
\end{proof}

\begin{proof}[Proof of \Cref{thm:classical-PW}]
Part (1) is \Cref{lem:haar-average}. Part (2) follows from \Cref{prop:uniform-density}. For Part (3), \Cref{prop:L2-density} and \Cref{lem:complete-reducibility} show that the closed linear span of irreducible matrix coefficients is all of $L^2(G)$, and \Cref{lem:schur-orthogonality} shows that these coefficient spaces are mutually orthogonal. After the standard normalization by $\sqrt{\dim H_\pi}$, one obtains an orthonormal basis. Part (4) is the corresponding Hilbert-space decomposition.
\end{proof}

\begin{remark}[Where the classical proof is not constructive]
Several steps in the above proof are distinctly classical.
\begin{enumerate}
    \item The use of the spectral theorem for compact self-adjoint operators on the Hilbert space $L^2(G)$ is a major non-constructive ingredient in the usual textbook presentation, because it produces global eigenspace decompositions and orthonormal expansions all at once.
    \item The passage from a point-separating $*$-subalgebra to uniform density via the classical Stone--Weierstrass theorem is likewise non-constructive in standard form.
    \item The theorem is phrased globally in terms of the full unitary dual $\widehat G$ and a completed Hilbert direct sum. Constructively, one often prefers finite-stage approximation operators and explicitly controlled compactifications instead of such completed global decompositions.
\end{enumerate}
In this sense, the constructive literature discussed in the main text does not simply repeat the appendix theorem verbatim. Rather, it reorganizes the classical proof around those parts that admit constructive meaning: Haar-type averaging, finite-stage approximation, character-theoretic kernels, and compactification principles.
\end{remark}

\end{document}